\DeclareMathOperator{\Mat}{\operatorname{M}}
\DeclareMathOperator{\Mata}{\operatorname{A}}
\DeclareMathOperator{\End}{\operatorname{End}}
\DeclareMathOperator{\NT}{\operatorname{NT}}
\DeclareMathOperator{\GL}{\operatorname{GL}}
\DeclareMathOperator{\Vect}{\operatorname{span}}
\DeclareMathOperator{\im}{\operatorname{Im}}
\DeclareMathOperator{\rk}{\operatorname{rk}}
\DeclareMathOperator{\codim}{\operatorname{codim}}
\renewcommand{\setminus}{\smallsetminus}
\renewcommand{\epsilon}{\varepsilon}
\def\F{\mathbb{F}}
\def\calA{\mathcal{A}}
\def\calM{\mathcal{M}}
\def\calN{\mathcal{N}}
\def\calS{\mathcal{S}}
\def\calT{\mathcal{T}}
\def\calU{\mathcal{U}}
\def\calW{\mathcal{W}}
\def\calX{\mathcal{X}}
\def\calY{\mathcal{Y}}
\def\lcro{\mathopen{[\![}}
\def\rcro{\mathclose{]\!]}}
\theoremstyle{definition}
\newtheorem{Def}{Definition}[section]
\theoremstyle{plain}
\newtheorem{theo}{Theorem}[section]
\newtheorem{prop}[theo]{Proposition}
\newtheorem{cor}[theo]{Corollary}
\newtheorem{lemma}[theo]{Lemma}
\newtheorem{claim}{Claim}[section]
\theoremstyle{plain}
\theoremstyle{remark}
\title{On affine spaces of rectangular matrices with constant rank}
\author{Cl\'ement de Seguins Pazzis\footnote{Universit\'e de Versailles Saint-Quentin-en-Yvelines, Laboratoire de Math\'ematiques
de Versailles, 45 avenue des \'Etats-Unis, 78035 Versailles cedex, France}
\footnote{e-mail address: clement.de-seguins-pazzis@ac-versailles.fr}}
\begin{document}

\thispagestyle{plain}

\maketitle
\begin{abstract}
Let $\F$ be a field, and $n \geq p \geq r>0$ be integers.
In a recent article, Rubei has determined, when $\F$ is the field of real numbers,
the greatest possible dimension for an affine subspace of $n$--by--$p$ matrices with entries in $\F$
in which all the elements have rank $r$. In this note, we generalize her result to an arbitrary
field with more than $r+1$ elements, and we classify the spaces that reach the maximal dimension as a function
of the classification of the affine subspaces of invertible matrices of $\Mat_s(\F)$ with dimension
$\dbinom{s}{2}$. The latter is known to be connected to the classification of
nonisotropic quadratic forms over $\F$ up to congruence.
\end{abstract}

\vskip 2mm
\noindent
\emph{AMS MSC:} 15A30, 15A03

\vskip 2mm
\noindent
\emph{Keywords:} affine subspace, rank, dimension.

% Relecture 4/4 achevée

\section{Introduction}

\subsection{The problem}\label{section:intro}

Let $\F$ be an arbitrary field. For positive integers $n$ and $p$, denote by $\Mat_{n,p}(\F)$ the vector space of
all matrices with $n$ rows, $p$ columns and entries in $\F$; set $\Mat_n(\F):=\Mat_{n,n}(\F)$ and
denote by $\GL_n(\F)$ its group of invertible elements.
Two subsets $\calX$ and $\calY$ of $\Mat_{n,p}(\F)$ are called \textbf{equivalent}, and we write
$\calX \sim \calY$, whenever there exist $P \in \GL_n(\F)$ and $Q \in \GL_p(\F)$ such that
$\calY=P \calX Q$, i.e.\ $\calX$ and $\calY$ represent the same set of linear mappings in a different choice of bases.
This defines an equivalence relation on affine subspaces of $\Mat_{n,p}(\F)$.

Let $r \in \lcro 1,\min(n,p)\rcro$.
We consider the following three problems:
\begin{enumerate}[(1)]
\item What is the greatest possible dimension $d^{=}_r(n,p)$ for an affine subspace of $\Mat_{n,p}(\F)$ in which every matrix has rank $r$?
\item What is the greatest possible dimension $d^{\geq}_r(n,p)$ for an affine subspace of $\Mat_{n,p}(\F)$ in which every matrix has rank at least $r$?
\item What is the greatest possible dimension $d^{\leq}_r(n,p)$ for an affine subspace of $\Mat_{n,p}(\F)$ in which every matrix has rank at most $r$?
\end{enumerate}
In each case, we can also inquire about the structure of the affine subspaces that attain the greatest possible dimension, an even more difficult question.
By transposing, we see that it suffices to consider the case where $n \geq p$, and from now on we will systematically make that assumption.

Problem (3) has already been solved in \cite{dSPaffpres} (see also \cite{dSPlargerankrevisited} for a more recent account), over all fields,
including not only the description of the spaces that attain the dimension $d^{\leq}_r(n,p)$, but
of the spaces with dimension close to $d^{\leq}_r(n,p)$.
As a reminder, we simply have $d^{\leq}_r(n,p)=nr$, and except when $n=p=|\F|=2$ and $r=1$, the affine subspaces of $\Mat_{n,p}(\F)$ with rank at most $r$ and dimension $nr$ are actually linear subspaces.
Problem (2) has also been solved, with a very special exception: It was proved in \cite{dSPgivenrank} that
$d^{\geq}_r(n,p)=np-\dbinom{r+1}{2}$ (here it is actually better to think in terms of codimension than in terms of dimension);
in \cite{dSPlargeaffinenonsingular} the spaces that have the maximal possible dimension were classified provided $|\F|>2$ and $n=p=r$,
and in \cite{dSPlargeaffinerankbelow} the result was extended to all possible values of $n,p,r$ (still provided that $|\F|>2$).
We will recall the results of the case $n=p=r$ in Section \ref{section:optimal} but their knowledge is not required to understand the present article.

In contrast, problem (1) did not receive any satisfying general answer yet.
In \cite{Rubei}, Rubei started to contribute to it by obtaining the value of $d^{=}_r(n,p)$ for the field of real numbers:

\begin{theo}[Rubei (2022)]\label{theo:Rubei}
Assume that $\F$ is the field of real numbers. Let $n \geq p \geq r>0$ be integers. Then
$$d^{=}_r(n,p)=\dbinom{r}{2}+r(n-r).$$
\end{theo}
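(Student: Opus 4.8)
\emph{The plan.} I would prove matching lower and upper bounds, the upper bound by induction on $p$; the argument uses nothing special about $\R$, so it yields the value over an arbitrary field. For the \emph{lower bound}, consider the affine subspace
\[
\calV_0:=\left\{\begin{pmatrix}A & 0_{r,\,p-r}\\ C & 0_{n-r,\,p-r}\end{pmatrix}\;:\;A\in I_r+\NT_r(\F),\ C\in\Mat_{n-r,r}(\F)\right\},
\]
where $\NT_r(\F)$ denotes the space of strictly upper triangular $r$-by-$r$ matrices. Every $A\in I_r+\NT_r(\F)$ is unipotent, hence invertible, so $\begin{pmatrix}A\\ C\end{pmatrix}$ has rank $r$, and appending zero columns does not change the rank; thus $\calV_0$ has constant rank $r$ and $\dim\calV_0=\binom{r}{2}+r(n-r)$, proving $d^{=}_r(n,p)\geq\binom{r}{2}+r(n-r)$.

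\emph{Upper bound: setup, base case, and the easy reduction.} Let $\calV$ be an affine subspace of constant rank $r$; write $\calV=M_0+V$ with $M_0\in\calV$ and $V$ a linear subspace, and, after replacing $\calV$ by an equivalent space, assume $M_0=\begin{pmatrix}I_r&0\\0&0\end{pmatrix}$. Writing an arbitrary element of $V$ as $\begin{pmatrix}A&B\\C&D\end{pmatrix}$ in matching block form and expressing, for $t$ near $0$, the rank of $M_0+tN$ via the Schur complement of the invertible block $I_r+tA$, the vanishing of $tD-t^2C(I_r+tA)^{-1}B$ for all such $t$ forces (order by order in $t$) that $D=0$ and $CA^kB=0$ for every $k\geq0$. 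I would induct on $p$. The base case $p=r$ is clear: there every matrix of $\calV$ has full column rank, hence rank $\geq r$, so $\dim\calV\leq d^{\geq}_r(n,r)=nr-\binom{r+1}{2}=\binom{r}{2}+r(n-r)$ by the known solution of problem~(2). When $p>r$, a direct computation in the above normal form identifies $\bigcap_{M\in\calV}\Ker M$ with the set of vectors of $\F^p$ whose first $r$ coordinates vanish and whose remaining coordinates form a common kernel vector of all the $B$-blocks of $V$; hence if this intersection is nonzero, a change of coordinates on $\F^p$ makes one column of every element of $\calV$ vanish, so that $\calV$ becomes a constant-rank-$r$ subspace of $\Mat_{n,p-1}(\F)$, and the induction hypothesis applies (the bound does not involve $p$).

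\emph{The hard case.} It remains to treat a $\calV$ with no common kernel vector, and this is where the real work lies; here one expects $\dim\calV$ to fall strictly below $\binom{r}{2}+r(n-r)$, since the extremal space $\calV_0$ has a large common kernel. The plan is to decompose $\dim V=\dim V_0+\dim\pi(V)$, where $V_0$ is the part of $V$ supported on the upper-right block and $\pi$ is the projection onto the $(A,C)$-coordinates; to bound $\dim\pi(V)$ by comparing the resulting space of $n$-by-$r$ matrices $\begin{pmatrix}I_r+A\\ C\end{pmatrix}$ with the maximal affine spaces of invertible matrices occurring in problem~(2) for $n=p=r$ (classified in \cite{dSPlargeaffinenonsingular}); and to use the relations $CA^kB=0$ to limit $\dim V_0$ once $\pi(V)$ has been pinned down, symmetrizing with the transposed statement (no common image hyperplane) when $n=p$. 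Running this dimension count carefully is the main obstacle; and keeping track of exactly which choices in the successive reductions are forced is what will, in the end, produce the classification of the extremal spaces and thereby the stated link with affine spaces of invertible $s$-by-$s$ matrices of dimension $\binom{s}{2}$ --- equivalently, with nonisotropic quadratic forms up to congruence.
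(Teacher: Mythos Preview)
Your lower bound and your normalization are fine, and your derivation of $D=0$ and $CA^kB=0$ (in your notation) is the affine Flanders--Atkinson lemma in disguise. But the proposal is not a proof: you explicitly defer the entire content of the upper bound to the ``hard case'' and then only sketch a plan. That plan, moreover, has a structural flaw. You propose to project onto the first $r$ columns and compare $\{\begin{pmatrix}I_r+A\\ C\end{pmatrix}\}$ with the known bounds for affine spaces of $n\times r$ matrices of rank~$r$; but nothing guarantees that these projections have rank~$r$. A rank-$r$ matrix $\begin{pmatrix}I_r+A & B\\ C & 0\end{pmatrix}$ can perfectly well have its first $r$ columns of rank~$<r$ (take $r=1$, $A=-1$, $B=1$, $C=0$). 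So the comparison with problem~(2) for $\Mat_{n,r}$ does not go through, and your dimension count for $\pi(V)$ has no input.

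The paper's proof uses a different and sharper decomposition. Instead of projecting onto the left columns, it isolates the affine subspace $\calT\subseteq\calS$ of matrices with $B=C=0$; \emph{this} space genuinely has constant rank~$r$ in $\Mat_r(\F)$, so Theorem~\ref{theo:nonsingulardim} gives $\dim\calT\le\binom{r}{2}$. The remaining dimension is $\dim W$ where $W=\{(B(M),C(M)):M\in S\}$, and the key idea you are missing is a \emph{polarization} of the quadratic identity $BC=0$: if $N\in S$ has $C(N)=0$, then $(B(M)+B(N))C(M)=0$ forces $B(N)C(M)=0$ for every $M\in S$. Setting $U:=\sum_{M}\im C(M)$ with $s:=\dim U$ and $t:=r-s$, this orthogonality gives $\dim C(S)\le s(p-r)$ and $\dim B(S')\le t(n-r)$, whence $\dim W\le s(p-r)+t(n-r)\le r(n-r)$ since $n\ge p$. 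No induction on $p$ and no case split on the common kernel are needed. (A minor point: your ``for $t$ near $0$'' argument does use that $\F$ is infinite; the algebraic version requires $|\F|>r+1$.)
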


To understand why the dimension $\dbinom{r}{2}+r(n-r)$ is optimal, it suffices to consider the affine subspace consisting of all the matrices of the form
$$\begin{bmatrix}
I_r +T & [0]_{r \times (p-r)} \\
[?]_{(n-r) \times r} & [0]_{(n-r) \times (p-r)}
\end{bmatrix}$$
where $T$ ranges over the linear subspace $\NT_r(\F)$ of all strictly upper-triangular matrices of $\Mat_r(\F)$, and the block with the question mark is unspecified.

Rubei abstained from trying to analyse the spaces of maximal dimension for the special case she studied in \cite{Rubei}
(the one of the field of real numbers).

\subsection{Main results}

It is our ambition here to generalize Theorem \ref{theo:Rubei} to almost all fields, and to analyze the spaces with maximal dimension.
Here is our first main result:

\begin{theo}\label{theo:dim}
Let $n \geq p \geq r>0$ be integers. Assume that $|\F|>r+1$.  Then
$$d^{=}_r(n,p)=\dbinom{r}{2}+r(n-r).$$
\end{theo}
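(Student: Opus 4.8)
The lower bound $d^{=}_r(n,p)\geq \binom{r}{2}+r(n-r)$ is already established by the explicit example in Section~\ref{section:intro}, so the whole task is to prove the upper bound, i.e.\ that every affine subspace $\calV\subseteq\Mat_{n,p}(\F)$ all of whose elements have rank exactly $r$ satisfies $\dim\calV\leq\binom{r}{2}+r(n-r)$, under the standing assumptions $n\geq p\geq r>0$ and $|\F|>r+1$. The plan is to argue by induction on $r$ (and, within a fixed $r$, possibly on $p$), after reducing to a normalized situation by the equivalence relation $\sim$: choosing a base point $A_0\in\calV$ of rank $r$, replacing $\calV$ by $P\calV Q$ we may assume $A_0=\begin{bmatrix} I_r & [0]\\ [0] & [0]\end{bmatrix}$, so that $\calV=A_0+\calW$ for a linear subspace $\calW\subseteq\Mat_{n,p}(\F)$, and every matrix $A_0+M$ with $M\in\calW$ has rank $r$. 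Writing $M$ in block form $\begin{bmatrix} M_1 & M_2\\ M_3 & M_4\end{bmatrix}$ with $M_1$ of size $r\times r$, the rank-$r$ condition forces, for all scalars $t$ in a large enough set and all $M\in\calW$, that the Schur-type complement $M_4 - M_3(I_r+M_1)^{-1}M_2$ vanishes whenever $I_r+M_1$ is invertible; since $|\F|>r+1$, invertibility of $I_r+tM_1$ holds for all but at most $r$ values of $t$, which is the point where the cardinality hypothesis enters and lets us pass to polynomial identities in $t$.

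The key structural step is to control the ``corner'' block $M_4$. Using the polynomial identity in $t$ obtained above, one extracts that $M_4=0$ for every $M\in\calW$ (the constant term of the vanishing Schur complement), and more: that for $M,M'\in\calW$ one has mixed bilinear relations such as $M_3 M_2' + M_3' M_2 = 0$. In particular the projection $\calW\to\Mat_{n-r,p-r}(\F)$, $M\mapsto M_4$, is zero, so $\calW$ is contained in the space of block matrices with zero bottom-right corner; its dimension is then bounded by $\dim$(space of $M_1$) $+\dim$(space of admissible $(M_2,M_3)$ pairs). The subspace of $M_1$'s that occur is an affine-translate issue reduced to the $n=p=r$ case: the set $\{I_r+M_1\}$ consists of invertible $r\times r$ matrices (take $M_2=M_3=M_4=0$-compatible elements, or argue via genericity), so by the already-cited classification/dimension result for affine spaces of invertible matrices in $\Mat_r(\F)$ with $|\F|>2$ (recalled in Section~\ref{section:optimal}, giving maximal dimension $\binom{r}{2}$), the $M_1$-component contributes at most $\binom{r}{2}$. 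The $(M_2,M_3)$-component is governed by the bilinear relations $M_3M_2'+M_3'M_2=0$; the hard part is showing this part contributes at most $r(n-r)$, which one does by a dimension count on the pair of maps $M\mapsto M_2\in\Mat_{r,p-r}(\F)$ and $M\mapsto M_3\in\Mat_{n-r,r}(\F)$, bounding $\dim\calW$ by $\max(\dim\im(M\mapsto M_2),\dim\im(M\mapsto M_3))$ up to the $M_1$-contribution, via an isotropy/alternating-bilinear-form argument (this is where the induction on $r$ is really used: restricting to the subspace where $M_2=0$ drops to rank $r$ in an $(n-r)$-by-something block and applies the inductive hypothesis, or symmetrically for $M_3=0$).

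The main obstacle I expect is precisely this last combinatorial/bilinear step: proving that once $M_4\equiv 0$, the joint constraints linking $M_1,M_2,M_3$ do not allow $\dim\calW$ to exceed $\binom{r}{2}+r(n-r)$. The naive bound $\dim\{M_1\}+\dim\{M_2\}+\dim\{M_3\}$ is far too large ($\binom{r}{2}+r(p-r)+(n-r)r$), so one genuinely needs the bilinear relations to force the $M_2$ and $M_3$ parts to ``share'' dimension — concretely, that the span of the $M_2$'s together with the span of the $M_3$'s and the constraint $M_3M_2'+M_3'M_2=0$ behave like an alternating form whose total rank is at most $2r(n-r)$-bounded-appropriately, collapsing to a single $r(n-r)$. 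I would handle this by a careful case analysis on whether the map $M\mapsto M_3$ is injective on a complement of $\Ker(M\mapsto M_2)$, reducing in each branch either to the inductive hypothesis in smaller $r$ or to the $n=p=r$ classification; the cardinality hypothesis $|\F|>r+1$ must be tracked to remain valid in these reductions (it only ever degrades to $|\F|>r'+1$ for $r'<r$, which is weaker, so induction is safe), and the exceptional small cases where $|\F|\leq r+1$ are exactly those excluded from the statement.
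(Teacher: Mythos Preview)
Your setup matches the paper's: normalize to $J_r\in\calS$, write blocks, use the Flanders--Atkinson identities to kill $M_4$ and obtain $M_3M_2=0$ (hence the polarized relation $M_3M_2'+M_3'M_2=0$), and invoke Theorem~\ref{theo:nonsingulardim} for the upper-left block. Two issues arise after that.

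First, a minor imprecision: it is not true that $I_r+M_1$ is invertible for every $M_1$ in the full $M_1$-projection of $\calW$. What is true (and what the paper uses) is that on the \emph{kernel} $\calT$ of the projection $M\mapsto(M_2,M_3)$, the block $I_r+M_1$ is invertible, because then the whole matrix equals $\begin{bmatrix} I_r+M_1 & 0\\ 0 & 0\end{bmatrix}$ and has rank $r$. The correct decomposition is $\dim\calS=\dim\calT+\dim W$ with $W=\{(M_2,M_3):M\in\calW\}$, and Theorem~\ref{theo:nonsingulardim} bounds $\dim\calT\leq\binom{r}{2}$.

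Second, and more substantively: your plan for bounding $\dim W$ by $r(n-r)$ is where the argument is incomplete, and the route you sketch (induction on $r$, alternating-form rank analysis, case split on injectivity) is far more elaborate than what is needed. The paper's argument is direct and uses no induction. From your own polarized identity $M_3M_2'+M_3'M_2=0$, specialize to $M'$ in the subspace $S':=\{M\in\calW:M_2=0\}$: this gives $M_3'M_2=0$ for all $M'\in S'$ and all $M\in\calW$. Let $U:=\sum_{M\in\calW}\im M_2\subseteq\F^r$ and $s:=\dim U$, $t:=r-s$. Then every $M_3'$ with $M'\in S'$ vanishes on $U$, so $\dim M_3(S')\leq t(n-r)$; and trivially $\dim M_2(\calW)\leq s(p-r)$. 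The rank theorem gives $\dim W=\dim M_3(S')+\dim M_2(\calW)\leq t(n-r)+s(p-r)\leq r(n-r)$, the last inequality because $n\geq p$. That is the whole argument; no induction, no alternating-form machinery, no case analysis. Your ``max'' heuristic is also off: the correct picture is a convex combination $t(n-r)+s(p-r)$ along the parameter $s$, not a maximum of the two image dimensions.
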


Note that the case $p=r$ does not require $|\F|>r+1$ and is a special case of theorem 8 of \cite{dSPgivenrank}.

In the remainder of this introduction, we systematically assume that $|\F|>r+1$.
Further, we will construct a wide variety of spaces that have the maximal dimension. The following terminology will be useful:

\begin{Def}
Let $s>0$. An affine subspace of $\Mat_s(\F)$ is called \textbf{optimal} whenever it is included in $\GL_s(\F)$
(i.e.\ it has constant rank $s$) and its dimension equals $\dbinom{s}{2}$.
\end{Def}

Note that it was proved in \cite{dSPgivenrank} that the greatest possible dimension for an affine subspace of $\Mat_s(\F)$
with constant rank $s$ is $\dbinom{s}{2}$, over all fields (this result is recalled in Section \ref{section:technical} below).
An example of optimal affine subspace of $\Mat_s(\F)$ is $I_s+\NT_s(\F)$.

Now, if we take such an optimal affine subspace $\calW$ of $\Mat_r(\F)$, we can consider the extended space
$$\widetilde{\calW}^{(n,p)}:=\left\{\begin{bmatrix}
W & [0]_{r \times (p-r)} \\
B & [0]_{(n-r) \times (p-r)}
\end{bmatrix} \mid W \in \calW, \; B \in \Mat_{n-r,r}(\F)\right\}.$$
Obviously, all the elements of $\widetilde{\calW}^{(n,p)}$ have rank $r$, and
$\widetilde{\calW}^{(n,p)}$ has dimension $\dbinom{r}{2}+r(n-r)=d^{=}_r(n,p)$.

More generally, consider a partition $r=s+t$ with $s\geq 0$ and $t\geq 0$, and respective optimal subspaces
$\calM$ and $\calN$ of $\Mat_t(\F)$ and $\Mat_s(\F)$. Consider then the space
$\calM \wedge_{n,p} \calN$ of all matrices of the form
$$\begin{bmatrix}
[?]_{s \times t} & N & [?]_{s \times (p-r)} \\
M & [0]_{t \times s} & [0]_{t \times (p-r)} \\
[?]_{(n-r) \times t} & [0]_{(n-r) \times s} & [0]_{(n-r) \times (p-r)} \\
\end{bmatrix} \quad \text{with $M \in \calM$ and $N \in \calN$.}$$
Clearly, all the matrices in this affine subspace have rank $s+t=r$, and this space has dimension
$$st+\dbinom{s}{2}+\dbinom{t}{2}+s\,(p-r)+(n-r)\,t=\dbinom{r}{2}+s\,(p-r)+(n-r)\,t.$$
If $n=p$, this dimension is simply $\dbinom{r}{2}+(s+t)\,(n-r)=\dbinom{r}{2}+r(n-r)=d_r^{=}(n,p)$,
but if $n>p$, $s>0$ and $t>0$ then it is easily checked that this dimension is less than $d_r^{=}(n,p)$.

Now, we can state our second main result:

\begin{theo}\label{theo:maxdim}
Assume that $|\F|>r+1$. Let $n \geq p > r>0$ be integers. Let $\calS$ be an affine subspace of
$\Mat_{n,p}(\F)$ of dimension $d^{=}_r(n,p)$ in which all the matrices have rank $r$.
\begin{enumerate}[(a)]
\item If $n>p$, then $\calS \sim \widetilde{\calW}^{(n,p)}$ for an optimal
affine subspace $\calW$ of $\Mat_r(\F)$, and the equivalence class of $\calW$ is uniquely determined by the one of
$\calS$.
\item If $n=p$ there exists a partition $r=s+t$, with $s \geq 0$ and $t \geq 0$, and
respective optimal subspaces $\calM \subseteq \Mat_t(\F)$ and $\calN \subseteq \Mat_s(\F)$ such that
$\calS \sim \calM \wedge_{n,n} \calN$; Moreover, the pair $(s,t)$ and the equivalence classes of $\calM$ and $\calN$ are uniquely
determined by the equivalence class of $\calS$.
\end{enumerate}
\end{theo}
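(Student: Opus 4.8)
The overall strategy is to reduce the problem, through a chain of normalizations, to the classification of \emph{optimal} affine subspaces of $\Mat_r(\F)$ (the case $n=p=r$) recalled in Section~\ref{section:optimal}. The hypothesis $|\F|>r+1$ is used everywhere through the elementary fact that a one‑variable polynomial of degree at most $r+1$ that vanishes on all of $\F$ is the zero polynomial; hence, along any affine line contained in $\calS$, each $(r+1)$-minor of the running matrix vanishes identically, so that ``generic rank $r$'' relations can be extracted exactly as over an infinite field. By transitivity of the $\GL_n(\F)\times\GL_p(\F)$-action on rank‑$r$ matrices we may assume $\calS=J_r+V$, where $J_r=\begin{bmatrix} I_r & 0\\ 0 & 0\end{bmatrix}$ and $V$ is a linear subspace of dimension $d^{=}_r(n,p)=\dbinom r2+r(n-r)$; we use the block decomposition with row sizes $(r,n-r)$ and column sizes $(r,p-r)$. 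Forming, over $\F(\lambda)$, the Schur complement of the generically invertible block $I_r+\lambda M_1$, the rank‑$r$ condition along the lines $J_r+\F M$ ($M\in V$) first gives that the bottom‑right $(n-r)\times(p-r)$ block of every element of $V$ is zero, i.e.\ $\calS\subseteq\calR$, the subspace of matrices whose bottom‑right block vanishes, and, pushed one order further, that $M_3M_1^kM_2=0$ for all $k\geq 0$ and all $M\in V$.

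Next I would exploit the common kernel $\calK:=\bigcap_{M\in\calS}\Ker M\subseteq\F^p$ and the common left kernel $\calK':=\bigcap_{M\in\calS}\Ker M^T\subseteq\F^n$; note $\dim\calK\leq p-r$ and $\dim\calK'\leq n-r$. If $\dim\calK=p-r$ then $\Ker M=\calK$ for every $M\in\calS$, so in a basis of $\F^p$ adapted to $\calK$ every element of $\calS$ has its last $p-r$ columns equal to $0$; hence $\calS$ identifies with an affine subspace of $\Mat_{n,r}(\F)$ of constant rank $r$ and dimension $\dbinom r2+r(n-r)=d^{=}_r(n,r)$. This is the case $p=r$, whose classification (a special case of Theorem~8 of \cite{dSPgivenrank}, or obtained by the same argument as below with the ``extra columns'' absent) says that such a space is equivalent to $\widetilde{\calW}^{(n,r)}$ for an optimal $\calW\subseteq\Mat_r(\F)$; pulling back the column identification gives $\calS\sim\widetilde{\calW}^{(n,p)}$. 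This settles case (a), and when $n=p$ it is the instance $s=0$ of the statement in (b). The symmetric possibility $\dim\calK'=n-r$ is handled by transposing; one checks that it forces $n=p$ and that $\calS\sim\calM\wedge_{n,n}\calN$ with $t=0$ and $\calN$ optimal.

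The substantial case --- and the main obstacle --- is $\dim\calK<p-r$ and $\dim\calK'<n-r$, where one must produce a partition $r=s+t$ with $s,t>0$ and optimal $\calM\subseteq\Mat_t(\F)$, $\calN\subseteq\Mat_s(\F)$ with $\calS\sim\calM\wedge_{n,p}\calN$. The technical heart is to show that, after a further equivalence preserving $J_r$ and $\calR$, \emph{the top-left $r\times r$ block of every element of $\calS$ is invertible}: this holds in all the model spaces, fails for non‑maximal ones, and I expect it to follow from a dimension count --- a rank‑deficient top‑left block would force so many of the $(r+1)$-minor relations to be absorbed by the corner blocks that $\dim\calS$ could not reach $d^{=}_r(n,p)$. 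Granting it, the top‑left blocks of $\calS$ form an affine subspace of $\GL_r(\F)$, hence $\calX:=\{M_1:M\in V\}$ satisfies $\dim\calX\leq\dbinom r2$, so $V_1:=\{M\in V:M_1=0\}$ has $\dim V_1\geq r(n-r)$; and for $M\in V_1$ the Schur complement of $I_r$ gives $M_3M_2=0$. A short bilinear‑algebra argument, bounding $\{M_3:M\in V_1\}$ and $\{M_2:(M_2,0)\in V_1\}$ against each other, yields $\dim V_1\leq r(n-r)$, whence $\dim V_1=r(n-r)$, $\dim\calX=\dbinom r2$, and, if $n>p$, a contradiction with $\dim\calK<p-r$ and $\dim\calK'<n-r$ (so this case occurs only for $n=p$). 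Then, combining $M_3M_2=0$ on $V_1$ with the Schur relations coupling $\calX$ to $V_1$ forces, after one more change of basis, a subspace $U\subseteq\F^r$ --- with $s:=\dim U$, $t:=r-s$, both positive --- such that every $M_2$ has image in $U$, every $M_3$ vanishes on $U$, and $\calX$ is block‑upper‑triangular for the flag $0\subset U\subset\F^r$. The two diagonal blocks of $I_r+\calX$ are then affine subspaces of $\GL_s(\F)$ and $\GL_t(\F)$; since $\dim\calX=\dbinom r2=\dbinom s2+st+\dbinom t2$, they must have dimensions $\dbinom s2$ and $\dbinom t2$, i.e.\ they are optimal subspaces $\calN$ and $\calM$, and a last dimension count shows that $V$ fills the whole block pattern, so $\calS=\calM\wedge_{n,n}\calN$.

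Finally, uniqueness. The trichotomy $s=0$ / $t=0$ / $s,t>0$ is detected by $\dim\calK=p-r$ / $\dim\calK'=n-r$ / neither, and in the last case $s$ and $t$ are recovered from the common row‑support of $\{M_2:M\in V_1\}$ and the common column‑support of $\{M_3:M\in V_1\}$, equivalently from the flag built above. If $P\calS_1Q=\calS_2$ with $\calS_1,\calS_2$ among the model spaces, then $Q$ carries $\calK(\calS_1)$ onto $\calK(\calS_2)$ and $P$ carries $\calK'(\calS_1)$ onto $\calK'(\calS_2)$; combined with the invariance of the flag this forces $P$ and $Q$ to be block‑triangular in the relevant bases, and the transformations they induce on the diagonal blocks exhibit the required equivalences between the $\calM$'s and $\calN$'s (resp.\ between the $\calW$'s). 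All the genuine difficulty is concentrated in the invertibility statement and the ensuing structural extraction; the remaining arguments are reductions and bookkeeping resting on the square‑case classification.
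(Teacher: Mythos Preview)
Your outline has the right coarse shape, but there is a genuine gap precisely where you flag the ``technical heart'', and the missing tool is not a dimension count. You assert that, after a further equivalence, the top-left $r\times r$ block of every element of $\calS$ is invertible. The paper does \emph{not} establish anything like this as an intermediate step; in its intermediate normalizations the $A$-block of a general element of $\calS$ is not known to be invertible, and that invertibility is essentially the conclusion of the whole existence argument. What the paper does instead is work with $\calT:=\{M\in\calS:B(M)=C(M)=0\}$, show by a dimension count that $A(\calT)$ is optimal, define $U:=\sum_{M\in S}\im C(M)$ over the \emph{full} translation space $S$, prove that $U$ is $A(\calT)$-invariant via the $k=1$ Flanders--Atkinson identity, and then---this is the key step you are missing---use the Transitivity Lemma for optimal spaces (Lemma~\ref{lemma:transitivity}: for optimal $\calW\subseteq\GL_s(\F)$ there is a hyperplane $H$ with $\Vect_\F(\calW)\,X=\F^s$ for all $X\notin H$) together with the $k=1$ and $k=2$ identities to force $B_1=0$ on all of $S$ and to kill the couplings $F,G$ between the $B_2,C_1$ blocks and the $A_{2,1}$-block. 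Your phrase ``Schur relations coupling $\calX$ to $V_1$'' cannot replace this: the relation $M_3M_2=0$ you extract lives only on $V_1=\{M\in V:M_1=0\}$, and you give no mechanism to propagate the image/kernel conclusions to all of $V$. For the same reason your handling of case~(a) is incomplete: your $s'=0$ when $n>p$ only says $M_2=0$ on $V_1$, not on $V$, so it does not by itself yield $\dim\calK=p-r$; the paper gets this immediately because its $U$ is summed over all of $S$. After the vanishing claims, the paper still does not argue directly with the $A$-block but instead permutes columns and invokes Theorem~\ref{theo:affinemax} on the resulting $K$- and $L$-blocks.

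For uniqueness when $n=p$, the trichotomy via $\calK,\calK'$ correctly separates $s=0$, $t=0$, and the mixed case, but your proposed invariants in the mixed case (row/column supports over $V_1$, the ``flag'') depend on the choice of $J_r$-normalization. The paper uses instead an equivalence-invariant characterization: it proves (Claim~\ref{claim:orthogonality}) that there is a \emph{unique} pair $(F,G)$ of subspaces of $\F^n$ with $F\underset{\calS}{\bot}G$ and $\dim F+\dim G=2n-r$, and establishing that uniqueness is itself a nontrivial step your sketch does not supply.
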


Here, we exclude the case $p=r$ because it is already known (see \cite{dSPlargeaffinerankbelow} and also Theorem
\ref{theo:affinemax} in Section \ref{section:technical}).
Particularly, the uniqueness statement fails in the case $n=p=r$.

In the next section, we recall the classification of optimal affine subspaces of nonsingular matrices, which will allow us to obtain an even more
constructive description of the affine subspaces of $\Mat_{n,p}(\F)$ with constant rank $r$ and dimension $d^{=}_r(n,p)$.

\subsection{A review of optimal spaces of nonsingular matrices}\label{section:optimal}

Here, we recall the main results of \cite{dSPgivenrank,dSPlargeaffinenonsingular}
on affine spaces of nonsingular matrices. The first result gives the greatest possible dimension for such an affine subspace.

\begin{theo}[See \cite{Quinlan,dSPgivenrank}]\label{theo:nonsingulardim}
Let $n>0$. Let $\calM$ be an affine subspace of $\Mat_n(\F)$ that is included in $\GL_n(\F)$.
Then
$$\dim \calM \leq \dbinom{n}{2}.$$
\end{theo}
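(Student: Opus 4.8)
The plan is to bound the dimension of an affine subspace $\calM \subseteq \GL_n(\F)$ by translating so that $I_n \in \calM$ and then studying the associated linear space. Write $\calM = I_n + V$ where $V$ is a linear subspace of $\Mat_n(\F)$; the hypothesis is that $I_n + A$ is invertible for every $A \in V$. First I would observe that this forces $V \cap \GL_n(\F)^{-}$-type obstructions, but more usefully: for every $A \in V$ and every scalar $\lambda \in \F$, if $\lambda \neq 0$ then $I_n + \lambda A = \lambda(\lambda^{-1} I_n + A)$ need not lie in $\calM$, so one cannot directly conclude that $-1/\lambda$ avoids the spectrum of $A$. Instead the right move is to pass to a larger field if necessary (the statement is purely about dimension, which is preserved under field extension, since invertibility is witnessed by a nonvanishing determinant and a basis of $V$ extends to a basis over $\Kbar$). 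Over an infinite field we may then argue as follows: the key step is to show that $V$ contains no nonzero matrix of the form required to build a large flag, by a counting or incidence argument.

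A cleaner route, and the one I would actually carry out, is the classical argument via the exterior algebra / Plücker embedding. Consider the map sending a matrix $M$ to its action on $\Lambda^{n} \F^n \cong \F$; that is just the determinant, which is too coarse. Instead, fix the subspace $V$ and consider, for a generic line $\F x \subseteq \F^n$, the evaluation $A \mapsto Ax$ from $V$ to $\F^n$. The real structural input is: since $I_n + A$ is invertible for all $A \in V$, the matrix $-I_n$ is not of the form $A$ with $A \in V$, but more: no matrix in $V$ has eigenvalue $-1$... again too weak. So the step I expect to be the genuine obstacle is extracting a codimension bound from the constant-rank-$n$ condition, and the standard resolution is the following: one shows that $V$, viewed inside $\Mat_n(\F)$, has the property that $V \cap \{M : M x = 0\}$ has codimension $\geq $ something for generic $x$, and then sums these over a flag. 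Concretely, I would prove by induction on $n$ that $\dim V \leq \binom{n}{2}$ by picking a suitable vector and quotienting.

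Here is the inductive skeleton. For $n = 1$, $\binom{1}{2} = 0$ and indeed $\calM \subseteq \F \setminus \{0\}$ is a point, so $\dim V = 0$. For the inductive step, I would choose a nonzero $v \in \F^n$ and consider the linear map $\varphi \colon V \to \F^n$, $A \mapsto (I_n + A)v - v = Av$. Let $V_0 = \Ker \varphi = \{A \in V : Av = 0\}$. The image $\varphi(V)$ is a subspace of $\F^n$; since every $I_n + A$ is invertible, $(I_n+A)v$ ranges over a set avoiding $0$, i.e. $v + \varphi(V) \subseteq \F^n \setminus \{0\}$, which gives no dimension restriction by itself. The actual constraint comes from choosing $v$ well: pick $v$ so that $\varphi(V)$ is as small as possible, or rather so that we can restrict attention to a hyperplane. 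If $\varphi$ is surjective then $\dim V_0 = \dim V - n$; matrices in $V_0$ kill $v$, so writing things in a basis with first vector $v$, each $A \in V_0$ has zero first column, and $(I_n + A)$ invertible forces the lower-right $(n-1)\times(n-1)$ block of $I_n + A$ to be invertible (expanding along the first column: the $(1,1)$ entry is $1$, and invertibility reduces to that block). Thus $V_0$ maps into an affine-invertible configuration in $\Mat_{n-1}(\F)$ — more precisely the space of lower-right blocks is a linear space $W$ with $I_{n-1} + W \subseteq \GL_{n-1}(\F)$, so by induction $\dim W \leq \binom{n-1}{2}$. The first rows (outside the $(1,1)$ slot) of elements of $V_0$ are unconstrained, contributing at most $n-1$ more dimensions, and the kernel of $V_0 \to W$ sits inside those first-row directions. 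Hence $\dim V_0 \leq (n-1) + \binom{n-1}{2}$, and $\dim V \leq n + (n-1) + \binom{n-1}{2} = \binom{n}{2} + 1$ — off by one. The hard part will be squeezing out that last unit: this is where one must use that $v$ was chosen generically (or invoke that over an infinite field one can choose $v$ so that $\varphi$ is \emph{not} surjective whenever $\dim V$ is near the bound, the point being that the image cone $v + \varphi(V)$ avoiding $0$ is impossible when $\varphi(V) = \F^n$ and $|\F| = \infty$, forcing $\dim \varphi(V) \leq n-1$ and improving the count by one). I would therefore treat the infinite-field case by this genericity refinement and reduce the finite-field case to it by base change to $\Kbar$, noting the bound $\binom{n}{2}$ is field-independent. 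As this result is quoted from \cite{Quinlan,dSPgivenrank}, I would ultimately cite those for the finite-field subtleties rather than reproving them in full.
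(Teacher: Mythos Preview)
Your inductive skeleton is on the right track, but the way you try to close the gap is wrong on several counts, and the gap is larger than you compute.

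First, the arithmetic: $n+(n-1)+\dbinom{n-1}{2}=n+\dbinom{n}{2}$, not $\dbinom{n}{2}+1$. More importantly, your observation that $v+\varphi(V)$ misses $0$ already forces $\varphi(V)\neq\F^n$ over \emph{every} field: since $\varphi(V)$ is linear, $\varphi(V)=\F^n$ would give $-v\in\varphi(V)$, i.e.\ some $A\in V$ with $(I_n+A)v=0$. So $\dim\varphi(V)\leq n-1$ holds unconditionally and no genericity is needed. Feeding this back in, your bound reads
\[
\dim V\;\leq\;(n-1)\;+\;\Bigl[(n-1)+\dbinom{n-1}{2}\Bigr]\;=\;\dbinom{n}{2}+(n-1),
\]
off by $n-1$; all the slack sits in the $(n-1)$ ``first-row'' directions in the kernel of the block map $V_0\to W$. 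Second, the base-change to $\Kbar$ is illegitimate: extending scalars enlarges $V$, and $I_n+V_{\Kbar}\subseteq\GL_n(\Kbar)$ simply fails in general. Over $\F_2$, for instance, $A=\begin{bmatrix}0&1\\1&1\end{bmatrix}$ has $I_2+\F_2 A\subseteq\GL_2(\F_2)$, yet over $\F_4$ the matrix $A$ acquires nonzero eigenvalues and $I_2+\F_4 A\not\subseteq\GL_2(\F_4)$.

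The paper does not prove Theorem~\ref{theo:nonsingulardim} (it cites \cite{Quinlan,dSPgivenrank}), but the correct inductive step is visible in its proof of Lemma~\ref{lemma:transitivity}. After normalizing to $\calM=I_n+V$, the space $V$ has trivial spectrum (end of Section~\ref{section:optimal}). The missing ingredient is Proposition~\ref{prop:matrixadapted}: one can choose $v$ to be \emph{$V$-adapted}, meaning $V$ contains no nonzero matrix with range $\F v$. With that choice, any element of the kernel of $V_0\to W$ has its only nonzero entries in the first row, hence has range contained in $\F v$, and adaptedness forces it to vanish. Then $\dim V_0\leq\dbinom{n-1}{2}$ and $\dim V\leq(n-1)+\dbinom{n-1}{2}=\dbinom{n}{2}$. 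The existence of an adapted vector is a nontrivial combinatorial lemma, valid over all fields, and is precisely what your argument is missing; neither ``generic $v$'' nor passage to $\Kbar$ substitutes for it.
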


As recalled earlier, the optimality of this result is exemplified by the space of all upper-triangular matrices with all diagonal entries equal to $1$.
In \cite{dSPlargeaffinenonsingular}, the structure of the optimal spaces was elucidated.
The description requires the notation for the \emph{joint} of subsets $\calX_1 \subseteq \Mat_{n_1}(\F),\dots,\calX_d \subseteq \Mat_{n_d}(\F)$,
denoted by $\calX_1 \vee \calX_2 \vee \cdots \vee \calX_d$ and
defined as the set of all $N \times N$ square matrices, with $N=\underset{k=1}{\overset{d}{\sum}} n_k$, of the form
$$\begin{bmatrix}
X_1 & [?] & & [?] \\
[0] & X_2 & \ddots &  \\
 & \ddots & \ddots & [?] \\
[0] & & [0] & X_d
\end{bmatrix}$$
where $X_1 \in \calX_1,\dots,X_d \in \calX_d$, and the blocks represented by question marks are arbitrary.

Finally, a matrix $P \in \GL_n(\F)$ is called \textbf{nonisotropic} whenever $\forall X \in \F^n \setminus \{0\}, \; X^TPX \neq 0$.
Denote by $\Mata_n(\F)$ the space of all alternating matrices of $\Mat_n(\F)$ (i.e.\ the skew-symmetric matrices with all diagonal entries zero).
In case $P \in \GL_n(\F)$ is nonisotropic, it is easily seen that $P+\Mata_n(\F)$ is an optimal subspace of $\Mat_n(\F)$
(note that $\forall X \in \F^n \setminus \{0\}, \; \forall A \in \Mata_n(\F), \; X^T (P+A)X=X^T PX \neq 0$). The following theorem is a form of
converse statement:

\begin{theo}[See \cite{dSPlargeaffinenonsingular}]
Let $n>0$. Assume that $|\F|>2$. Let $\calM$ be an optimal affine subspace of $\Mat_n(\F)$.
Then there exists a partition $n=n_1+\cdots+n_d$ into positive integers, together with nonisotropic matrices $P_1 \in \GL_{n_1}(\F),\dots,P_d \in \GL_{n_d}(\F)$
such that
$$\calM \sim (P_1+\Mata_{n_1}(\F)) \vee \cdots \vee (P_d+\Mata_{n_d}(\F)).$$
Moreover, the partition $(n_1,\dots,n_d)$ is uniquely determined by $\calM$, and for each $i \in \lcro 1,d\rcro$
the quadratic form $q_i : X \in \F^{n_i} \mapsto X^TP_iX$ is uniquely determined by $\mathcal{M}$ up to congruence
(two quadratic forms are congruent whenever they are equivalent up to multiplication with a nonzero scalar).
\end{theo}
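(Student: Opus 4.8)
The plan is to argue by induction on $n$, after normalising the base point. Since $\calM \subseteq \GL_n(\F)$, pick $A \in \calM$ and replace $\calM$ by $A^{-1}\calM$; thus we may assume $\calM = I_n + V$ for a linear subspace $V \subseteq \Mat_n(\F)$ with $\dim V = \dbinom{n}{2}$ and $I_n + V \subseteq \GL_n(\F)$. Two elementary remarks will be used throughout: no nonzero element of $\F$ is an eigenvalue of any $M \in V$ (else $I_n + tM$ would be singular for a suitable $t \in \F^*$), and every rank-one matrix $uv^T \in V$ satisfies $v^Tu = 0$ (else $\det(I_n + tuv^T) = 1 + t\,v^Tu$ vanishes at $t = -1/(v^Tu)$).

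The main dichotomy is whether the matrices in $\calM$ share a common invariant subspace $F$ with $\{0\} \subsetneq F \subsetneq \F^n$. Suppose they do, with $\dim F = k$; in a basis adapted to $F$, every $M \in \calM$ has the shape $\begin{bmatrix} A(M) & C(M) \\ 0 & B(M) \end{bmatrix}$. The images $\calA$ and $\calB$ of $M \mapsto A(M)$ and $M \mapsto B(M)$ are affine subspaces of $\GL_k(\F)$ and $\GL_{n-k}(\F)$ (a block-triangular matrix is invertible iff its diagonal blocks are), whence $\dim\calA \leq \dbinom{k}{2}$ and $\dim\calB \leq \dbinom{n-k}{2}$ by Theorem \ref{theo:nonsingulardim}; moreover each fibre of $M \mapsto A(M)$ injects into $\Mat_{k,n-k}(\F)$ times an affine subspace of $\GL_{n-k}(\F)$ (its corner blocks and bottom-right blocks), so
$$\dbinom{n}{2} = \dim\calM \leq \dbinom{k}{2} + k(n-k) + \dbinom{n-k}{2} = \dbinom{n}{2}.$$
Hence equality holds at every step: $\calA$ and $\calB$ are optimal, the corner block ranges over all of $\Mat_{k,n-k}(\F)$ within each fibre, and therefore $\calM = \calA \vee \calB$. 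One then applies the induction hypothesis to $\calA$ and $\calB$ and concatenates the resulting joints, using that $\vee$ is associative and compatible with $\sim$ applied block-diagonally; the base case $n = 1$ is trivial.

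The genuinely hard case is that of an irreducible $\calM$, i.e.\ when $V$ acts irreducibly on $\F^n$; one must then show that $\calM \sim P + \Mata_n(\F)$ for some nonisotropic $P$, which is the case $d = 1$. This can be recast as the existence of a symmetric invertible matrix $R$ with $RM$ alternating for every $M \in V$ (equivalently $RM + M^TR = 0$): indeed $\dim V = \dbinom{n}{2} = \dim\Mata_n(\F)$ then forces $RV = \Mata_n(\F)$, so $R\calM = R + \Mata_n(\F)$, and $R + \Mata_n(\F) \subseteq \GL_n(\F)$ makes $R$ nonisotropic --- if $X \neq 0$ satisfies $X^TRX = 0$, the equation $AX = -RX$ has an alternating solution $A$, so $R + A$ is singular. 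Thus the crux is to produce a nondegenerate symmetric bilinear form on $\F^n$ infinitesimally preserved by $V$; equivalently, to show that $\F^n$ is self-dual of orthogonal --- not symplectic --- type as a module over the Lie algebra generated by $V$ (orthogonal type being the one consistent with $\dim V = \dbinom{n}{2}$, a symplectic Lie algebra having the larger dimension $\dbinom{n+1}{2}$). Extracting this self-duality, and the invertibility of the intertwiner, from the sole hypotheses $I_n + V \subseteq \GL_n(\F)$, $\dim V = \dbinom{n}{2}$, and irreducibility is where the hypothesis $|\F| > 2$ enters and where the work lies; the characteristic-two subcase, in which alternating matrices are more than the skew-symmetric ones, must be treated separately. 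I expect this step to be the main obstacle.

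Finally, the uniqueness statements reduce to checking that the (ordered) partition $(n_1,\dots,n_d)$ and the congruence classes of the $q_i$ are invariants of the $\sim$-class of $\calM$. The blocks with $n_i \geq 2$ are irreducible and those with $n_i = 1$ are lines, and --- because the corner blocks in a joint are the full matrix spaces --- the extensions between consecutive subquotients do not split; hence the flag $\{0\} = F_0 \subset F_1 \subset \dots \subset F_d = \F^n$ realising $\calM$ as a joint is intrinsic, the partition is read off from it, and each $q_i$ is recovered, up to congruence and scaling, as the form attached by the reformulation above to the action of $\calM$ on $F_i/F_{i-1}$. Here one uses that $P + \Mata_n(\F) \sim P' + \Mata_n(\F)$ precisely when the symmetric bilinear forms of $P$ and $P'$ are congruent up to a nonzero scalar.
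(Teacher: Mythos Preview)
The paper does not actually prove this theorem: it is quoted from \cite{dSPlargeaffinenonsingular} as background, so there is no in-paper argument to compare against. On its own merits, your proposal is an outline with an explicitly acknowledged hole rather than a proof.

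The reducible step is fine: once $\Vect(\calM)$ (not ``the matrices in $\calM$'' --- $I_n\in\calM$ stabilises everything) has a proper invariant subspace, the dimension count does force $\calM=\calA\vee\calB$ with $\calA,\calB$ optimal, and induction goes through. But the irreducible case is the whole theorem, and there you only \emph{reformulate} the goal (find a nondegenerate symmetric $R$ with $RV=\Mata_n(\F)$) without producing $R$. Your Lie-algebraic heuristic --- orthogonal versus symplectic type, matching $\dbinom{n}{2}$ against $\dbinom{n+1}{2}$ --- is suggestive but not an argument: $V$ is not assumed to be a Lie subalgebra, irreducibility of $V$ as a set of operators does not by itself yield self-duality of the module, and the dimension coincidence alone cannot exclude other maximal trivial-spectrum spaces. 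The actual proof in \cite{dSPlargeaffinenonsingular} is substantial and goes through a detailed analysis that you have not reproduced; as you yourself write, this ``is where the work lies.''

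The uniqueness sketch is also incomplete. That $P_i+\Mata_{n_i}(\F)$ is irreducible for $n_i\geq 2$ is true (because $\Mata_{n_i}(\F)$ already acts irreducibly on $\F^{n_i}$) but unproved in your text. More seriously, ``the extensions do not split, hence the flag is intrinsic'' is not a proof that every equivalence between two joints respects the block structure; one needs to show that the maximal common invariant flag is \emph{unique}, and then that $P+\Mata_n(\F)\sim P'+\Mata_n(\F)$ forces congruence of the associated quadratic forms --- both are genuine lemmas, not remarks.
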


In other words, every equivalence class of optimal affine subspaces of $\Mat_n(\F)$
is determined by a list of congruence classes of (nonzero) nonisotropic quadratic forms over $\F$.
In the special case $|\F|=2$, there is no known general form for the optimal affine subspaces of $\Mat_n(\F)$
(see \cite{dSPprimitiveF2} for examples).

From there, point (b) of Theorem \ref{theo:maxdim} can be reinterpreted as saying that, under the provision $|\F|>r+1$,
the equivalence classes of constant rank $r$ affine subspaces of $\Mat_{n}(\F)$ with dimension $\dbinom{r}{2}+r(n-r)$
are classified by lists $(q_1,\dots,q_m)$ of (nonzero) nonisotropic quadratic forms over $\F$, with respective ranks
$n_1,\dots,n_m$, such that $\underset{k=1}{\overset{m}{\sum}} n_k=r$,
and an integer $u \in \lcro 0,m\rcro$ (the corresponding partition $(s,t)$ of $r$ is defined by $t:=\underset{k=1}{\overset{u}{\sum}} n_k$).
With such data, we consider associated nonisotropic matrices $P_1,\dots,P_m$ and we construct the affine space
$$\bigl((P_1+\Mata_{n_1}(\F)) \vee \cdots \vee (P_u+\Mata_{n_u}(\F))\bigr) \wedge_{n,n} \bigl((P_{u+1}+\Mata_{n_{u+1}}(\F)) \vee \cdots \vee (P_m+\Mata_{n_m}(\F))\bigr).$$
Then, the equivalence class of the result determines the pair $(s,t)$ and the congruence classes of the nonisotropic quadratic forms
$q_1,\dots,q_m$.

In the remainder of the article, we will not need such a precise understanding of the optimal spaces, but we will need a basic result on them
(Lemma \ref{lemma:transitivity}). However, we will use theorem 3 from \cite{dSPlargeaffinerankbelow}, whose only known proof uses such a precise understanding.

Before we proceed, it is useful, in the prospect of the proof of Lemma \ref{lemma:transitivity}, to recall how affine subspaces of nonsingular matrices
are connected to a variation of spaces of nilpotent matrices. Let $V$ be a finite-dimensional vector space.
A \emph{linear} subspace $S$ of $\End(V)$ is said to have \textbf{trivial spectrum} whenever no element of $S$ has a non-zero eigenvalue in $\F$.
We adopt a similar definition for square matrices. Let $\calS \subseteq \Mat_n(\F)$ be an affine subspace, with translation vector space denoted by $\overrightarrow{S}$.
Let $A \in \calS \cap \GL_n(\F)$. Then for all $N \in \overrightarrow{S}$, the invertibility
of $A+tN$ for all $t \in \F$ is equivalent to the fact that $A^{-1}N$ has no nonzero eigenvalue in $\F$.
Therefore, $\calS \subseteq \GL_n(\F)$ if and only if $A^{-1}\overrightarrow{S}$ is a trivial spectrum subspace of $\Mat_n(\F)$.

\subsection{Strategy, structure of the article, and open problems}

Our basic proof strategy is similar to the one of Rubei in \cite{Rubei}. We take an element of an affine subspace $\calS$
with constant rank $r$, we put it in normalized form, and then we exploit the assumption that every element of $\calS$ has rank at most $r$ to obtain polynomial identities on the elements of the translation vector space of $\calS$, in a way that is similar to Flanders's seminal work on spaces of bounded rank matrices \cite{Flanders}.
The result of Theorem \ref{theo:dim} is then obtained by combining these identities with the main theorem of \cite{Quinlan,dSPgivenrank}
(recalled here as Theorem \ref{theo:nonsingulardim}), which deals with affine spaces of nonsingular square matrices.
The analysis of the spaces with maximal dimension is then obtained by refining the analysis and by using a key result
on optimal affine subspaces of square matrices, the Transitivity Lemma (see Lemma \ref{lemma:transitivity});
A powerful result from the study of affine spaces with rank bounded below \cite{dSPlargeaffinerankbelow} is also
used to wrap the proof up. The uniqueness statements in Theorem \ref{theo:maxdim} are obtained by interpreting the results in terms of sets
of bilinear forms and orthogonal subspaces, and then deferring to similar uniqueness statements for spaces of matrices with rank bounded below.

All in all, the tools and strategy are reminiscent of the ones of the recent \cite{dSPaffaltconstantrank}, with different details and additional complexity.

The article is laid out as follows: in Section \ref{section:technical}, we recall the main tools, developed by Flanders, Atkinson and Lloyd,
to study spaces of matrices with rank bounded above; then we recall a key result of \cite{dSPlargeaffinerankbelow} on large spaces of matrices with rank bounded below;
and we finish with the Transitivity Lemma for optimal affine spaces of nonsingular matrices. Section \ref{section:proofs} is devoted to the proof of
Theorem \ref{theo:dim} and of the existence statements in Theorem \ref{theo:maxdim}. The uniqueness statements in Theorem \ref{theo:maxdim}
are proved in Section \ref{section:uniqueness}, which can be read independently of the other sections.

Let us finish with an open problem. In both Theorems \ref{theo:dim} and \ref{theo:maxdim}, we require that $\F$ has more than $r+1$
elements. This provision is unavoidable in our proof, which relies on polynomial identities that are derived from upper bounds on the ranks of matrices,
and those identities do not hold anymore if $|\F|<r+2$. Yet, in Problems (2) and (3) cited in Section \ref{section:intro},
solutions have been found with little or no restriction on the underlying field (the fields with $2$ elements are generally a cause of concern, but a limited one as far as Problem (3)
is concerned). Hence, we can speculate that the result of Theorem \ref{theo:dim} remains true even for fields with small cardinality. But to prove this would require
a complete revolution in the methods, and at the present time we have no idea for an alternate way to tackle this problem.

\section{Technical preliminaries}\label{section:technical}

Our proof techniques essentially rely on block-matrix results from the theory of vector spaces of bounded rank matrices.
Primarily, we will use the following result, which we call the affine version of the Flanders-Atkinson lemma.
We point to \cite{AtkinsonPrim,dSPLLD1} for various proofs of it, and to \cite{dSPaffaltconstantrank} for a note on the affine version:

\begin{lemma}[Affine version of the Flanders-Atkinson lemma]
Let $n,p,r$ be integers with $0<r \leq \min(n,p)$. Assume that $|\F|>r+1$.
Let $J_r:=\begin{bmatrix}
I_r & 0 \\
0 & 0
\end{bmatrix}$ and $M=\begin{bmatrix}
A & C \\
B & D
\end{bmatrix}$ belong to $\Mat_{n,p}(\F)$, with $A \in \Mat_r(\F)$ and so on.
If $\rk(J_r+t M) \leq r$ for all $t \in \F$, then $D=0$ and $B A^kC=0$ for every integer $k \geq 0$.
\end{lemma}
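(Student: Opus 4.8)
The plan is to reduce the statement to a sequence of one-parameter rank constraints and exploit the fact that over a field with more than $r+1$ elements a polynomial of degree at most $r+1$ with more than $r+1$ roots must vanish identically. First I would observe that $\rk(J_r+tM)\leq r$ for all $t\in\F$ already forces, by looking at a fixed large value of $t$ and absorbing it, that it suffices to treat the case where we simply assume $\rk(J_r+M')\le r$ for $M'$ ranging over a line; but more efficiently, I would directly work with the block form. Write $J_r+tM=\begin{bmatrix} I_r+tA & tC \\ tB & tD\end{bmatrix}$. For $t\neq 0$ and $t$ avoiding the finitely many values where $I_r+tA$ is singular, the matrix $I_r+tA$ is invertible, and by the standard Schur-complement rank formula the condition $\rk(J_r+tM)\le r$ is equivalent to the vanishing of the Schur complement, i.e.\ $tD-tB(I_r+tA)^{-1}tC=0$, that is $D=tB(I_r+tA)^{-1}C$.

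Next I would turn this into a polynomial identity. Multiplying through by $\det(I_r+tA)$, the entrywise identity $\det(I_r+tA)\,D = t\,B\,\operatorname{adj}(I_r+tA)\,C$ holds for all $t$ outside a finite set, hence for all $t\in\F$ provided $|\F|$ is large enough — and here $|\F|>r+1$ is exactly what guarantees there are enough values of $t$, since both sides are polynomial in $t$ of degree at most $r+1$. Setting $t=0$ gives $D=0$. Then the identity becomes $t\,B\,\operatorname{adj}(I_r+tA)\,C=0$ for all $t$, so $B\,\operatorname{adj}(I_r+tA)\,C=0$ for all $t\neq 0$, hence (again by the degree/cardinality argument, the degree now being at most $r-1$) as a polynomial identity in $t$. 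Now expand $\operatorname{adj}(I_r+tA)=\sum_{k\ge 0} t^k E_k$ where each $E_k$ is a polynomial in $A$ (a linear combination of $I_r,A,\dots,A^{r-1}$, e.g.\ via $\operatorname{adj}(I_r+tA)=\det(I_r+tA)\,(I_r+tA)^{-1}$ expanded as a power series, or via the explicit formula $\operatorname{adj}(I-uA)=\sum_{k\ge0}u^k\bigl(A^k-(\text{lower-order symmetric-function terms})\bigr)$ with $u=-t$). Identifying coefficients of each power of $t$ yields $B E_k C=0$ for all $k$. Since $E_0=I_r$ we get $BC=0$; since $E_1$ is $A$ up to a scalar multiple of $I_r$, combining with $BC=0$ gives $BAC=0$; inductively, because $E_k$ equals $A^k$ modulo $\Vect(I_r,A,\dots,A^{k-1})$, an induction on $k$ using $BE_0C=\cdots=BE_{k-1}C=0$ and $BA^jC=0$ for $j<k$ delivers $BA^kC=0$ for every $k\ge 0$.

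The main obstacle — and the only place the hypothesis $|\F|>r+1$ is genuinely used — is the passage from "the identity holds for all $t$ in a cofinite subset of $\F$" to "the corresponding polynomial in $t$ is identically zero," which requires that the number of available scalars strictly exceeds the degree bound $r+1$; if $|\F|\le r+1$ the Schur-complement polynomial could have all of $\F$ among its roots without vanishing, and the conclusion genuinely fails. A secondary technical point is bookkeeping the degrees: one must check that $\det(I_r+tA)$ and each entry of $t\,B\operatorname{adj}(I_r+tA)C$ have degree at most $r$ and at most $r+1$ respectively (or $r-1$ after cancelling the factor $t$ and using $D=0$), so that $r+2$ distinct values of $t$ suffice; this is routine. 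Everything else is linear algebra: the Schur-complement rank identity, the expansion of the adjugate in powers of $tA$, and the triangular induction extracting $BA^kC=0$ from the $BE_kC=0$.
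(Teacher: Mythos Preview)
The paper does not prove this lemma itself (it cites \cite{AtkinsonPrim,dSPLLD1,dSPaffaltconstantrank}), and your approach is essentially the classical one. However, there is a genuine quantitative gap in your cardinality bookkeeping. The Schur-complement identity $D=tB(I_r+tA)^{-1}C$ is only available for $t\neq 0$ with $\det(I_r+tA)\neq 0$, which excludes up to $r+1$ values of $t$. The polynomial $\det(I_r+tA)\,D - t\,B\operatorname{adj}(I_r+tA)\,C$ has degree at most $r$ (not $r+1$: $\det(I_r+tA)$ has degree $\le r$ and each entry of $\operatorname{adj}(I_r+tA)$ has degree $\le r-1$), and you only know it vanishes at $\ge|\F|-(r+1)$ values. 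To force identical vanishing you would need $|\F|-(r+1)>r$, i.e.\ $|\F|>2r+1$, not $|\F|>r+1$. In the borderline case $|\F|=r+2$, if $A$ is chosen so that $\det(I_r+tA)$ has $r$ distinct nonzero roots in $\F$, your Schur-complement step applies at a single value of $t$, which determines nothing.

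The repair is short: bypass the Schur complement and work with minors. For $i>r$ and $j>r$, the $(r+1)\times(r+1)$ minor of $J_r+tM$ on rows $1,\dots,r,i$ and columns $1,\dots,r,j$ equals
\[
t\,d_{ij}\det(I_r+tA)\;-\;t^2\,b_i\,\operatorname{adj}(I_r+tA)\,c_j
\]
as a \emph{polynomial identity in $t$}, valid whether or not $I_r+tA$ is invertible. Since $\rk(J_r+tM)\le r$ for \emph{every} $t\in\F$, this polynomial of degree $\le r+1$ vanishes at all $|\F|>r+1$ points of $\F$ and is therefore identically zero. Dividing out the factor $t$ and setting $t=0$ gives $d_{ij}=0$; then $b_i\operatorname{adj}(I_r+tA)c_j\equiv 0$, and your adjugate-expansion induction extracting $BA^kC=0$ goes through verbatim. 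So the strategy is right, but routing through the Schur complement discards exactly the values of $t$ that make $|\F|>r+1$ the sharp hypothesis.
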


\begin{theo}\label{theo:affinemax}
Let $n$ and $r$ be positive integers with $1 \leq r \leq n$. Assume that $|\F|>2$.
Let $\calT$ be an affine subspace of $\Mat_{n,r}(\F)$ in which every matrix has rank $r$, and
assume that $\codim_{\Mat_{n,r}(\F)} \calT \leq \frac{r(r+1)}{2}\cdot$
Then there exists an optimal affine subspace $\calM$ of $\Mat_r(\F)$ such that
$$\calT \sim \widetilde{\calM}^{(n)}:=\left\{\begin{bmatrix}
B \\
C
\end{bmatrix} \mid (B,C) \in \calM \times \Mat_{n-r,r}(\F)\right\}.$$
Moreover, the equivalence class of $\calM$ is uniquely determined by $\calT$.
\end{theo}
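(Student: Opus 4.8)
The plan is to reduce the whole statement to a single piece of linear geometry: that, up to equivalence, $\calT$ is a compression space, in the sense that there exists an $(n-r)$--dimensional subspace $U$ of $\F^n$ with $\im M\cap U=\{0\}$ for every $M\in\calT$. Granting such a $U$, the classification follows quickly from Theorem \ref{theo:nonsingulardim} and a dimension count; the real work is in producing $U$, and that is where I expect the main obstacle to lie.

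\emph{From a transversal to the structure.} Suppose $U$ as above exists. Pick $P\in\GL_n(\F)$ with $P(U)=\{0\}^r\times\F^{n-r}$ and replace $\calT$ by $P\calT$: all hypotheses are preserved, and now the projection $\calA$ of $\calT$ onto its first $r$ rows consists of invertible matrices, since for $M\in\calT$ the map $\F^r\to\F^n/U$ induced by $M$ is an isomorphism precisely when $\im M\cap U=\{0\}$. By Theorem \ref{theo:nonsingulardim}, $\dim\calA\leq\binom r2$. Writing $\overrightarrow{\calT}$ for the translation space of $\calT$, the hypothesis $\codim\calT\leq\binom{r+1}2$ gives $\dim\overrightarrow{\calT}\geq nr-\binom{r+1}2$, whence
$$\dim\bigl(\overrightarrow{\calT}\cap(\{0\}^r\times\Mat_{n-r,r}(\F))\bigr)=\dim\overrightarrow{\calT}-\dim\calA\geq nr-\binom{r+1}2-\binom r2=(n-r)\,r.$$
As the space on the left is contained in $\{0\}^r\times\Mat_{n-r,r}(\F)$, which has dimension $(n-r)r$, it equals it; so $\overrightarrow{\calT}\supseteq\{0\}^r\times\Mat_{n-r,r}(\F)$, and since $\calA$ is the first-$r$-rows image of $\calT$ we get $\calT=\bigl\{\begin{bmatrix}A\\ C\end{bmatrix}:A\in\calA,\ C\in\Mat_{n-r,r}(\F)\bigr\}$. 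Feeding this back into the displayed inequalities forces $\dim\calA=\binom r2$, so $\calA$ is optimal and $\calT=\widetilde{\calA}^{(n)}$; take $\calM:=\calA$. (The case $n=r$ is the degenerate instance $U=\{0\}$: then $\calT\subseteq\GL_r(\F)$ with $\dim\calT\geq\binom r2$, hence $\calT$ is optimal by Theorem \ref{theo:nonsingulardim}.)

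\emph{Existence of a transversal.} Normalise so that $J:=\begin{bmatrix}I_r\\ 0\end{bmatrix}\in\calT$ (possible after left multiplication by a suitable element of $\GL_n(\F)$, since every matrix of $\calT$ has rank $r$). Because $\calT$ is affine, for each $N\in\overrightarrow{\calT}$ and each $t\in\F$ the matrix $J+tN=(1-t)J+t(J+N)$ lies in $\calT$, hence has rank $r$; writing $N=\begin{bmatrix}B\\ C\end{bmatrix}$, this says $\ker(I_r+tB)\cap\ker C=\{0\}$ for all $t\in\F$. From this family of rank identities, together with $|\F|>2$, I would extract --- by comparing the $r\times r$ minors of $J+tN$ and using that a nonzero univariate polynomial of degree $\leq r$ does not vanish identically, in the spirit of Flanders's and Atkinson's analyses of bounded-rank spaces --- a block-triangular, compression-type normal form for $\overrightarrow{\calT}$ relative to $J$, reducing the construction of $U$ to the same problem for an affine space of \emph{square} invertible matrices. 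On that square block one then applies the classification of optimal affine subspaces recalled in Section \ref{section:optimal} (a joint of spaces $P_i+\Mata_{n_i}(\F)$ with each $P_i$ nonisotropic) to write down a transversal explicitly. I expect this to be the decisive difficulty: a bare dimension count cannot suffice, because in the extremal examples the subspaces $\im M$ ($M\in\calT$) are Zariski-dense in the Grassmannian, so no genericity argument produces the required $U$; this is precisely why, as remarked in the Introduction, the only known proof of theorem 3 of \cite{dSPlargeaffinerankbelow} relies on the fine structure of optimal spaces.

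\emph{Uniqueness.} It is enough to check that $\widetilde{\calM}^{(n)}$ has a \emph{unique} $(n-r)$--dimensional transversal, namely $\{0\}^r\times\F^{n-r}$. Indeed, the images $\im M$ for $M\in\widetilde{\calM}^{(n)}$ are exactly the graphs of the linear maps $\F^r\to\F^{n-r}$; a subspace transversal to all of them meets $\F^r\times\{0\}^{n-r}$ trivially, hence is the graph of a linear map $S\colon\F^{n-r}\to\F^r$, and if $S\neq 0$ one readily builds a graph meeting it nontrivially, so $S=0$. Consequently the transversal of any $\calT$ in the theorem is canonically attached to $\calT$ (it is the image of the one of $\widetilde{\calM}^{(n)}$ under any equivalence), and $\calM$ is recovered, up to equivalence, as the optimal image of $\calT$ in $\Hom(\F^r,\F^n/U)$; rephrasing through the list of congruence classes of nonisotropic quadratic forms attached to $\calM$ in Section \ref{section:optimal} yields the uniqueness statement as stated.
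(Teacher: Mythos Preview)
The paper does not prove Theorem \ref{theo:affinemax}: it is quoted from \cite{dSPlargeaffinerankbelow} as a known result and used as a black box in Section \ref{section:proofs}. So there is no ``paper's own proof'' to compare with; I can only assess your argument on its merits.

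Your reduction ``from a transversal to the structure'' is clean and correct: once an $(n-r)$--dimensional $U$ transversal to every $\im M$ exists, the rank--nullity count you give forces $\overrightarrow{\calT}\supseteq\{0\}^r\times\Mat_{n-r,r}(\F)$ and pins $\calA$ as optimal. Your uniqueness paragraph is also correct: the images of the matrices in $\widetilde{\calM}^{(n)}$ are precisely all graphs $\F^r\to\F^{n-r}$, and the only $(n-r)$--dimensional subspace transversal to all graphs is $\{0\}^r\times\F^{n-r}$, so $U$ is intrinsic to $\calT$ and $\calM$ is recovered up to equivalence.

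The genuine gap is your middle section. You have not proved the existence of $U$; you have written a plan (``I would extract \dots a block-triangular, compression-type normal form \dots reducing the construction of $U$ to the same problem for an affine space of square invertible matrices'') and candidly flagged it as ``the decisive difficulty.'' That is exactly right: this step \emph{is} the content of theorem 3 of \cite{dSPlargeaffinerankbelow}, and as the present paper itself remarks in the introduction, the only known proof goes through the full classification of optimal spaces recalled in Section \ref{section:optimal}. Your sketch does not supply the mechanism by which the minor identities force a compression structure; the identity $\ker(I_r+tB)\cap\ker C=\{0\}$ for all $t\in\F$ is a correct observation but is far from sufficient to produce a common transversal to all of $\calT$ simultaneously. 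So as written, the proposal is a correct reformulation (transversal $\Leftrightarrow$ desired structure, plus uniqueness) together with an honest statement of where the real theorem lives, but not a proof of that theorem.
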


We finish with a result on optimal spaces, which will be used repeatedly in our proof of Theorem \ref{theo:maxdim}.

\begin{lemma}[Transitivity lemma for optimal spaces]\label{lemma:transitivity}
Let $\calS$ be an optimal affine subspace of $\Mat_n(\F)$ for some $n>0$.
Then there exists a linear hyperplane $H$ of $\F^n$, called an \textbf{$\calS$-transitivity exclusion}, such that
$$\forall X \in \F^n \setminus H, \quad \Vect_\F(\calS) X=\F^n.$$
\end{lemma}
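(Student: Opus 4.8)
The plan is to use the classification of optimal affine subspaces of nonsingular matrices recalled above to reduce to the case where $\calS$ is in ``joint'' normal form, and then to exhibit the transitivity exclusion by a direct block computation. The first point to note is that the desired conclusion is invariant under the equivalence relation: if $\calS=P\,\calS_0\,Q$ with $P,Q\in\GL_n(\F)$, then $\Vect_\F(\calS)=P\,\Vect_\F(\calS_0)\,Q$, so $\Vect_\F(\calS)\,X=P\bigl(\Vect_\F(\calS_0)(QX)\bigr)$ for every $X\in\F^n$, whence $\Vect_\F(\calS)\,X=\F^n$ if and only if $\Vect_\F(\calS_0)(QX)=\F^n$; thus, if $H_0$ is an $\calS_0$-transitivity exclusion, then $Q^{-1}(H_0)$ is an $\calS$-transitivity exclusion. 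By the classification theorem (so here I assume $|\F|>2$, which suffices for the applications in this paper), we may therefore assume
$$\calS=\bigl(P_1+\Mata_{n_1}(\F)\bigr)\vee\cdots\vee\bigl(P_d+\Mata_{n_d}(\F)\bigr)$$
for some partition $n=n_1+\cdots+n_d$ into positive integers and some nonisotropic matrices $P_i\in\GL_{n_i}(\F)$.

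The one elementary ingredient I would isolate is the following: \emph{if $P\in\GL_m(\F)$ is nonisotropic and $y\in\F^m\setminus\{0\}$, then $\F\,Py+\Mata_m(\F)\,y=\F^m$.} Indeed, every alternating matrix $A$ satisfies $y^TAy=0$, so $\Mata_m(\F)\,y$ is contained in the hyperplane $y^\perp:=\{z\in\F^m\mid y^Tz=0\}$; conversely, given $z\in y^\perp$ and $w\in\F^m$ with $w^Ty=1$, the matrix $A:=zw^T-wz^T$ is alternating and satisfies $Ay=(w^Ty)\,z-(z^Ty)\,w=z$, so in fact $\Mata_m(\F)\,y=y^\perp$; finally $y^TPy\neq 0$ shows $Py\notin y^\perp$, and the claim follows.

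Now put $J_0:=P_1\oplus\cdots\oplus P_d$; since $J_0\in\calS$ we have $\calS=J_0+\overrightarrow{S}$, where $\overrightarrow{S}$ is the space of block-upper-triangular matrices whose $(i,i)$-block is alternating and whose blocks strictly above the diagonal are arbitrary, and therefore $\Vect_\F(\calS)=\F\,J_0+\overrightarrow{S}$. Decompose $\F^n=\F^{n_1}\oplus\cdots\oplus\F^{n_d}$, and let $Y=(Y_1,\dots,Y_d)$ with $Y_d\neq 0$. Given a target $Z=(Z_1,\dots,Z_d)\in\F^n$, apply a general element $\lambda\,J_0+T$ of $\Vect_\F(\calS)$ to $Y$: its $d$-th block is $\lambda\,P_dY_d+T_{dd}Y_d$, where $T_{dd}$ runs over $\Mata_{n_d}(\F)$, so by the previous paragraph one can choose $\lambda$ and $T_{dd}$ with $\lambda\,P_dY_d+T_{dd}Y_d=Z_d$; then, keeping $\lambda$ fixed and setting all diagonal blocks and all intermediate off-diagonal blocks of $T$ equal to $0$, the $i$-th block of $(\lambda J_0+T)Y$ equals $\lambda\,P_iY_i+T_{id}Y_d$ for each $i<d$, and since $Y_d\neq 0$ the map $C\mapsto CY_d$ on $\Mat_{n_i,n_d}(\F)$ is onto, so one can choose $T_{id}$ with $T_{id}Y_d=Z_i-\lambda\,P_iY_i$. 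Thus $(\lambda J_0+T)Y=Z$, proving $\Vect_\F(\calS)\,Y=\F^n$. Hence $\Vect_\F(\calS)\,X=\F^n$ whenever the $d$-th block of $X$ is nonzero, so any linear hyperplane $H$ of $\F^n$ containing the (codimension-$n_d$) subspace of vectors with vanishing $d$-th block is an $\calS$-transitivity exclusion; together with the first paragraph, this completes the proof.

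I expect the real content to be the appeal to the classification of optimal spaces: once $\calS$ has been brought to joint form, the rest is routine block bookkeeping. The price paid is the restriction $|\F|>2$ inherited from that classification; this is harmless here, since Lemma \ref{lemma:transitivity} is only ever invoked when $|\F|>r+1$. A proof avoiding the classification --- presumably via the trivial-spectrum reformulation recalled just above, i.e.\ showing directly that $\{X\in\F^n\mid\Vect_\F(\calS)\,X\neq\F^n\}$ is contained in a hyperplane --- would be preferable and valid over all fields, but I do not see how to carry it out uniformly, and that would be the genuinely hard part.
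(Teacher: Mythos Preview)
Your proof is correct (with the $|\F|>2$ caveat you note), but it takes precisely the route the paper mentions and then \emph{declines} to follow. The paper explicitly lists two strategies: the one via the classification of optimal spaces, and a ``simpler'' one via trivial-spectrum subspaces; it then carries out the second. Concretely, the paper picks $A\in\calS$, observes that $A^{-1}\overrightarrow{S}$ has trivial spectrum, and invokes Proposition~\ref{prop:matrixadapted}/Corollary~\ref{cor:adapted} to find a hyperplane $H$ outside of which every vector $X$ is $A^{-1}\overrightarrow{S}$-adapted. For such an $X$ (taken to be $e_n$), it shows $\dim(\overrightarrow{S}X)\geq n-1$ by a rank-theorem argument: the kernel $W=\{M\in A^{-1}\overrightarrow{S}:MX=0\}$ injects (by adaptedness) into a trivial-spectrum subspace of $\Mat_{n-1}(\F)$, hence has dimension at most $\binom{n-1}{2}$, so $\dim(A^{-1}\overrightarrow{S}X)\geq\binom{n}{2}-\binom{n-1}{2}=n-1$; together with $X\notin A^{-1}\overrightarrow{S}X$ one gets $\F AX+\overrightarrow{S}X=\F^n$.

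What each approach buys: your argument is more explicit and yields a concrete description of the transitivity exclusion (any hyperplane containing the first $n-n_d$ coordinates), but it imports the full classification theorem, which is heavy machinery and forces $|\F|>2$. The paper's argument is self-contained modulo the combinatorial Proposition~\ref{prop:matrixadapted}, uses only the dimension bound $\dim\leq\binom{n}{2}$ rather than the structural classification, and works over every field---so the lemma, as stated without hypotheses on $\F$, is actually proved in full. In short, the ``genuinely hard part'' you flag at the end is exactly what the paper does, and it turns out not to be hard: adaptedness of $X$ is precisely what makes the dimension count go through.
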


There are two strategies for the proof of Lemma \ref{lemma:transitivity}. The first one is to rely on the
explicit description of the optimal affine spaces given in Section \ref{section:optimal}.
There is a simpler proof that relies upon a basic result on trivial spectrum subspaces, which we recall now:

\begin{Def}
Let $\calS$ be a linear subspace of $\End(V)$ for some vector space $V$.
A vector $x \in V \setminus \{0\}$ is called \textbf{$\calS$-adapted} whenever $\calS$ contains no nonzero operator with range $\F x$.
\end{Def}

The following result was proved in \cite{dSPgivenrank} thanks to a combinatorial method:

\begin{prop}[See proposition 10 in \cite{dSPgivenrank}]\label{prop:matrixadapted}
Let $\calM$ be a trivial spectrum linear subspace of $\Mat_n(\F)$.
Then at least one vector of the standard basis of $\F^n$ is $\calM$-adapted.
\end{prop}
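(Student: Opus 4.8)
The plan is to argue by contradiction, recasting the trivial-spectrum hypothesis as a statement about a finite directed graph and then producing, inside $\calM$, a matrix with eigenvalue $1$. Suppose that none of the vectors $e_1,\dots,e_n$ of the standard basis is $\calM$-adapted. Since a nonzero matrix whose range equals $\F e_i$ is exactly one of the form $e_i v^{\,T}$ with $v \in \F^n \setminus \{0\}$ (the $j$-th coordinate of $v$ being the $(i,j)$-entry), for each $i \in \lcro 1,n\rcro$ we may fix $v_i \in \F^n \setminus\{0\}$ such that $N_i:=e_i v_i^{\,T}$ lies in $\calM$. The only algebraic identity needed is $N_i\,e_j=(v_i)_j\,e_i$. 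Form the directed graph $G$ on the vertex set $\lcro 1,n\rcro$ with an arc $i \to j$ precisely when $(v_i)_j \neq 0$; then every vertex has an outgoing arc (as $v_i \neq 0$), and there is no loop at any $i$, since otherwise $N_i e_i = (v_i)_i\,e_i$ with $(v_i)_i \neq 0$ would be a nonzero eigenvalue of the element $N_i$ of $\calM$.

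Next, $G$ being finite with all out-degrees positive, it contains a directed cycle; choose one, $C : i_1 \to i_2 \to \cdots \to i_k \to i_1$, of minimal length, so $k \geq 2$ (no loops). Minimality forces $C$ to be chordless: if $i_a \to i_b$ were an arc with $i_b$ distinct from $i_a$ and not the $C$-successor of $i_a$, concatenating it with the portion of $C$ running from $i_b$ back to $i_a$ would yield a directed cycle of length $<k$. Hence $(v_{i_a})_{i_b}=0$ whenever $i_b$ is a cycle vertex that is not the $C$-successor of $i_a$. Write $\alpha_a:=(v_{i_a})_{i_{a+1}} \neq 0$ (subscripts read modulo $k$); using that $\calM$ is a linear subspace, we may rescale $v_{i_1}$ so as to arrange $\prod_{a=1}^{k}\alpha_a=1$.

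Now put $P:=N_{i_1}+\cdots+N_{i_k} \in \calM$. By chordlessness, for each $m$ modulo $k$ the only term of $\sum_j N_{i_j}e_{i_m}$ that survives is the one with $i_m$ equal to the successor of $i_j$, so $P\,e_{i_m}=\alpha_{m-1}\,e_{i_{m-1}}$; thus $P$ stabilizes $\Vect(e_{i_1},\dots,e_{i_k})$ and acts there as a weighted cyclic shift. Seeking $w=\sum_m w_m e_{i_m}$ with $Pw=w$ leads to the recursion $w_j=\alpha_j w_{j+1}$, which is consistent precisely because $\prod_a\alpha_a=1$, and yields an explicit nonzero such $w$. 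Hence $P \in \calM$ has the nonzero eigenvalue $1$, contradicting the trivial-spectrum hypothesis; therefore some $e_i$ is $\calM$-adapted.

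The delicate point — and the reason that both the passage to a \emph{shortest} cycle and the rescaling are needed — is that the eigenvalue produced must genuinely lie in $\F$: summing the $N_{i_j}$ along an arbitrary cycle, without rescaling, only gives an operator on $\Vect(e_{i_1},\dots,e_{i_k})$ whose $k$-th power is a nonzero scalar, which over a field such as $\R$ (already for $k=2$) may have no eigenvalue in $\F$. Restricting to a chordless cycle makes the restricted operator an honest weighted cyclic shift, and normalizing the product of the weights to $1$ forces the eigenvalue to be $1 \neq 0$. The remaining verifications — the equivalence between ``range $\F e_i$'' and ``nonzero matrix $e_i v^{\,T}$'', and the existence of a directed cycle in a finite digraph with positive out-degrees — are routine.
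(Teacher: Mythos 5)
Your argument is correct and complete. Note that the paper does not actually prove this proposition: it imports it verbatim as proposition 10 of \cite{dSPgivenrank}, remarking only that it was obtained there ``thanks to a combinatorial method'', so there is no in-paper proof to compare against. Your digraph argument is a sound self-contained proof in exactly that combinatorial spirit: the reduction of ``$e_i$ not adapted'' to the presence of a rank-one matrix $e_i v_i^{T} \in \calM$ with $v_i \neq 0$ is right, the no-loop observation and the existence of a directed cycle in a finite digraph with positive out-degrees are routine, a shortest cycle is indeed chordless (a chord between cycle vertices would close up a strictly shorter cycle), and with the chords killed the sum $P=N_{i_1}+\cdots+N_{i_k}$ really does act on $\Vect(e_{i_1},\dots,e_{i_k})$ as a weighted cyclic shift, so that after rescaling one $N_{i_a}$ to make the product of the weights equal to $1$ you get $Pw=w$ for an explicit nonzero $w$, contradicting the trivial-spectrum hypothesis. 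You also correctly single out the two genuinely delicate points, namely chordlessness and the normalization guaranteeing that the eigenvalue produced lies in $\F$ (without it the restricted operator only has $k$-th power a scalar, which over $\R$, say, need not yield an eigenvalue in the field); both are handled properly.
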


\begin{cor}\label{cor:adapted}
Let $\calS$ be a trivial spectrum linear subspace of $\End(V)$ for some finite-dimensional vector space $V$.
Then there exists a linear hyperplane $H$ of $V$ which contains all the vectors $x \in V \setminus \{0\}$
that are not $\calS$-adapted.
\end{cor}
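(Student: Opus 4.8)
The plan is to reduce the statement to Proposition \ref{prop:matrixadapted} by choosing a good basis of $V$, and then to argue that the set of non-$\calS$-adapted vectors, together with $0$, is always contained in a linear subspace of codimension at least one. First I would observe that the property of a nonzero vector $x$ being $\calS$-adapted depends only on the line $\F x$, so it makes sense to speak of \emph{adapted lines}. The key point is a linearity-type property: I claim that the set $Z$ of vectors $x \in V$ such that $x=0$ or $x$ is \emph{not} $\calS$-adapted is a linear subspace of $V$. Granting this claim, $Z \neq V$ (by Proposition \ref{prop:matrixadapted} applied after identifying $\End(V)$ with $\Mat_n(\F)$ via any basis, at least one basis vector is $\calS$-adapted, hence lies outside $Z$), so $Z$ is contained in some linear hyperplane $H$ of $V$, and $H$ then contains every non-$\calS$-adapted vector, which is exactly the desired conclusion.

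So the heart of the matter is proving that $Z$ is a linear subspace. Stability under scalar multiplication is clear. For stability under addition, suppose $x$ and $y$ are both non-$\calS$-adapted and nonzero; I must show $x+y \in Z$, i.e.\ that $x+y=0$ or $x+y$ is non-$\calS$-adapted. Assume $x+y \neq 0$. By hypothesis there are nonzero operators $u, v \in \calS$ with $\im u = \F x$ and $\im v = \F y$. The natural idea is to produce from $u$ and $v$ a nonzero operator in $\calS$ with range $\F(x+y)$. Writing $u = x \otimes \varphi$ and $v = y \otimes \psi$ for suitable linear forms $\varphi, \psi$ on $V$ (rank-one operators), one looks for a scalar combination, or rather uses the fact that $\calS$ is a linear subspace: any $\lambda u + \mu v \in \calS$ has range contained in $\Vect_\F(x,y)$, and one wants to arrange the range to be exactly $\F(x+y)$. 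If $x$ and $y$ are collinear this is immediate. If they are linearly independent, then $\Vect_\F(x,y)$ is two-dimensional and I would exploit the trivial spectrum hypothesis: restrict attention to the behaviour of elements of $\calS$ on the plane $\Vect_\F(x,y)$, or better, find an operator in $\calS$ of the form $x \otimes \varphi + y \otimes \psi$ and choose the combination so that the image is the line $\F(x+y)$ — concretely, one needs $\varphi$ and $\psi$ to be proportional on the relevant subspace, and the trivial spectrum condition is what forbids the degenerate configurations that would otherwise obstruct this.

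The main obstacle I anticipate is precisely this last step: it is not a priori obvious that one can combine two rank-one operators in $\calS$ with independent ranges into one with range $\F(x+y)$, because the linear forms $\varphi$ and $\psi$ need not cooperate. The clean way around this is likely to pass through the matrix picture and invoke Proposition \ref{prop:matrixadapted} in a slightly more clever way than a single application: rather than proving $Z$ is a subspace directly, I would show that for \emph{every} hyperplane complement, or for a well-chosen basis adapted to a hypothetical pair of non-adapted independent vectors, Proposition \ref{prop:matrixadapted} forces a contradiction unless all non-adapted vectors lie in a common hyperplane. In other words, one assumes for contradiction that the non-adapted vectors are not contained in any hyperplane; then they span $V$, so one can extract a basis $(e_1,\dots,e_n)$ of $V$ consisting of non-$\calS$-adapted vectors; expressing $\calS$ in this basis contradicts Proposition \ref{prop:matrixadapted}, which guarantees some $e_i$ is $\calS$-adapted. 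This avoids the delicate rank-one manipulation entirely and is the route I would take to finish.
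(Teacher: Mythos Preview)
Your final route---assume for contradiction that the non-adapted vectors are not contained in any hyperplane, extract from them a basis of $V$, and contradict Proposition~\ref{prop:matrixadapted}---is exactly the paper's proof, verbatim. So you land in the right place.

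However, the detour you take first is not merely ``delicate'': the claim that the set $Z$ of non-$\calS$-adapted vectors (together with $0$) is a linear subspace is \emph{false} in general, so no amount of rank-one manipulation will rescue it. For a concrete counterexample, take $V=\F^3$ and let $\calS$ be the $2$-dimensional nilpotent subspace spanned by $E_{1,2}$ and $E_{2,3}$ (strictly upper-triangular, hence trivial spectrum). Then $e_1$ and $e_2$ are both non-adapted (witnessed by $E_{1,2}$ and $E_{2,3}$ respectively), but $e_1+e_2$ \emph{is} $\calS$-adapted: a nonzero element $aE_{1,2}+bE_{2,3}$ has range $\F e_1$, $\F e_2$, or $\Vect_\F(e_1,e_2)$ according to whether $b=0$, $a=0$, or $ab\neq 0$, and none of these is the line $\F(e_1+e_2)$. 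So $Z$ is not closed under addition. The corollary only asserts containment in \emph{some} hyperplane, not that $Z$ is itself a subspace, and your contrapositive argument at the end is the correct way to get exactly that weaker conclusion.
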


\begin{proof}[Proof of Corollary \ref{cor:adapted}]
Assuming otherwise, there would be a basis $(e_1,\dots,e_n)$ of $V$ none of whose terms is $\calS$-adapted.
Then, representing $\calS$ by a matrix space $\calM$ in that basis, we would obtain a contradiction with Proposition \ref{prop:matrixadapted}.
\end{proof}

We are now ready to prove Lemma \ref{lemma:transitivity}:

\begin{proof}[Proof of Lemma \ref{lemma:transitivity}]
Choose $A \in \calS$, and denote by $\overrightarrow{S}$ the translation vector space of $\calS$. Then the linear subspace $A^{-1} \overrightarrow{S}$ has trivial spectrum.
Let $X \in \F^n \setminus \{0\}$ be $A^{-1}\overrightarrow{S}$-adapted. We shall prove that $\F AX+\overrightarrow{S}X=\F^n$.
Without loss of generality, we can assume that $X$ is the last vector of the standard basis of $\F^n$.

Consider the subspace $W$ of $A^{-1}\overrightarrow{S}$ consisting of all its matrices with last column zero.
Then by the rank theorem $\dim (A^{-1}\overrightarrow{S} X)+\dim W=\dim A^{-1}\overrightarrow{S}$. Next, write every $M \in W$
as
$$M=\begin{bmatrix}
K(M) & [0]_{(n-1) \times 1} \\
[?]_{1 \times (n-1)} & 0
\end{bmatrix} \quad \text{with $K(M) \in \Mat_{n-1}(\F)$,}$$
and note that $K(M)$ has trivial spectrum.
Hence $K(W)$ is a trivial spectrum subspace of $\Mat_{n-1}(\F)$, leading to $\dim K(W) \leq \dbinom{n-1}{2}$ by Theorem \ref{theo:nonsingulardim}.
Finally, note that $M \in W \mapsto K(M)$ is injective because $X$ is $A^{-1}\overrightarrow{S}$-adapted.
Hence
$$\dim (A^{-1}\overrightarrow{S} X) =\dim \overrightarrow{S}-\dim W \geq \dbinom{n}{2}-\dbinom{n-1}{2}=n-1.$$
Finally $\F X \cap A^{-1}\overrightarrow{S}X=\{0\}$ because $A^{-1}\overrightarrow{S}$ has trivial spectrum. This yields
$\F A X+\overrightarrow{S} X=\F^n$, whence $\Vect_\F(\calS) X=\F^n$.

Hence $\Vect_\F(\calS) X=\F^n$ for every $A^{-1}\overrightarrow{S}$-adapted vector $X$. By combining this with Corollary \ref{cor:adapted}, we obtain the claimed result.
\end{proof}

\section{The greatest dimension for a constant rank affine subspace, and the structure of spaces with maximal dimension}\label{section:proofs}

\subsection{The inequality statement}

Here, we prove Theorem \ref{theo:dim}.

Let $\calS$ be an affine subspace of $\Mat_{n,p}(\F)$ with constant rank $r$.
Replacing $\calS$ with an equivalent subspace, we lose no generality in assuming that $\calS$
contains the matrix
$$J_r:=\begin{bmatrix}
I_r & [0]_{r \times (p-r)} \\
[0]_{(n-r) \times r} & [0]_{(n-r) \times (p-r)}
\end{bmatrix}.$$
We will write every matrix $M \in \Mat_{n,p}(\F)$ blockwise along the same format as $J_r$:
$$M=\begin{bmatrix}
A(M) & C(M) \\
B(M) & D(M)
\end{bmatrix}
\quad \text{with $A(M) \in \Mat_r(\F)$ and so on.}$$
Now, we consider the affine subspace $\calT$ of $\calS$ consisting of its matrices of the form
$$M=\begin{bmatrix}
A(M) & [0]_{r \times (p-r)} \\
[0]_{(n-r) \times r} & [0]_{(n-r) \times (p-r)}
\end{bmatrix}.$$
Obviously $A(\calT)$ is an affine subspace of $\Mat_r(\F)$ of constant rank $r$, and $\dim \calT=\dim A(\calT)$. Hence, by Theorem \ref{theo:nonsingulardim},
$$\dim \calT \leq \dbinom{r}{2}.$$
Next, we denote by $S$ the translation vector space of $\calS$.
Let $M \in S$. For all $\alpha \in \F$, the matrix $J_r+\alpha M$ has rank $r$ because it belongs to $\calS$. Since $|\F|>r+1$, we deduce from
the Flanders-Atkinson lemma that
$$B(M)\, C(M)=0 \quad \text{and} \quad D(M)=0.$$
Next, by the rank theorem applied to $M \in S \mapsto (B(M),C(M))$, we find
$$\dim \calS=\dim \calT+\dim W \quad \text{for} \quad W:=\bigl\{(B(M),C(M)) \mid M \in S\bigr\}.$$
To complete the proof of Theorem \ref{theo:dim}, it will suffice to prove that $\dim W \leq r(n-r)$.
Set
$$S':=\{M \in S : C(M)=0\}.$$
By the rank theorem we have
$$\dim W=\dim B(S')+\dim C(S).$$
Now, let $M \in S$ and $N \in S'$. Then $B(M+N)\,C(M+N)=0$ and $B(M)\,C(M)=0$. Since $C(N)=0$, this yields
$$B(N)\,C(M)=0.$$
Denote finally by
$$U:=\sum_{M \in S} \im C(M) \subseteq \F^r$$
the sum of the column spaces of the elements of $C(S)$,
and set
$$s:=\dim U \quad \text{and} \quad t:=r-s,$$
so that
$$\dim C(S) \leq s\,(p-r).$$
As all the elements of $B(S')$ vanish everywhere on $U$, we find
$$\dim B(S') \leq t\,(n-r).$$
Hence, since $n \geq p$,
$$\dim W \leq t\,(n-r)+s\,(p-r) \leq r\,(n-r).$$
This yields
$$\dim \calS \leq \dbinom{r}{2}+r\,(n-r),$$
thereby completing the proof of Theorem \ref{theo:dim}.

\subsection{The case of equality (I)}\label{section:equality1}

\label{page:startequality}

Now, we assume that $\dim \calS=\dbinom{r}{2}+r(n-r)$. By analyzing the previous proof, it follows that:
\begin{enumerate}[(i)]
\item $\dim \calT=\dbinom{r}{2}$;
\item $C(S)$ is the set of all matrices of $\Mat_{r,p-r}(\F)$ with column space included in $U$;
\item $B(S')$ is the set of all matrices of $\Mat_{n-r,r}(\F)$ whose nullspace includes $U$.
\end{enumerate}
Moreover, if $n>p$ then $s>0$ would lead to
$$t\,(n-r)+s\,(p-r)<t\,(n-r)+s\,(n-r)=r\,(n-r),$$
contradicting the assumption that $\dim \calS=\dbinom{r}{2}+r(n-r)$.
Hence $s=0$ if $n>p$.

Now, as a consequence of point (i) above, $A(\calT)$ is an optimal affine subspace of $\Mat_r(\F)$.

We proceed and apply the Flanders-Atkinson lemma once more, and this time around we start to apply the second set of identities with $k=1$.

\begin{claim}\label{claim:k=1Atk}
Let $M \in S$. Then $B(M)\,A'\,C(M)=0$ for all $A' \in A(\calT)$.
\end{claim}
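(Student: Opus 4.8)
The plan is to polarize the cubic identities that the Flanders--Atkinson lemma has already produced. Every $N \in S$ satisfies $\rk(J_r + \alpha N) = r$ for all $\alpha \in \F$, since $J_r + \alpha N \in \calS$; hence the affine version of the Flanders--Atkinson lemma applies to the pair $(J_r, N)$ and yields $D(N) = 0$ together with $B(N)\,A(N)^k\,C(N) = 0$ for every integer $k \geq 0$. Only the cases $k = 0$ and $k = 1$ are needed: every $N \in S$ satisfies
$$B(N)\,C(N) = 0 \qquad\text{and}\qquad B(N)\,A(N)\,C(N) = 0 .$$

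Next I would pin down the exact shape of $A(\calT)$. Since $J_r \in \calT$ and, by the first displayed identity together with $D(N)=0$ for all $N \in S$, the translation vector space of $\calT$ is $\{N \in S : B(N) = C(N) = 0\}$, the space $A(\calT)$ equals $I_r + \{A(N) : N \in S,\ B(N) = C(N) = 0\}$ as an affine subspace of $\Mat_r(\F)$. Consequently every $A' \in A(\calT)$ can be written $A' = I_r + A(N_0)$ with $N_0 \in S$, $B(N_0) = 0$ and $C(N_0) = 0$.

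Fixing $M \in S$ and such an $N_0$, the decisive step is to feed $M + N_0 \in S$ into the $k = 1$ identity above. Because $B(N_0) = C(N_0) = 0$, one has $B(M + N_0) = B(M)$ and $C(M + N_0) = C(M)$, whereas $A(M + N_0) = A(M) + A(N_0)$; hence
$$0 = B(M)\bigl(A(M) + A(N_0)\bigr) C(M) = B(M)\,A(M)\,C(M) + B(M)\,A(N_0)\,C(M),$$
and the first summand vanishes by the $k = 1$ identity applied to $M$ itself. Therefore $B(M)\,A(N_0)\,C(M) = 0$, and combining this with $B(M)\,C(M) = 0$ gives $B(M)\,A'\,C(M) = B(M)\bigl(I_r + A(N_0)\bigr)C(M) = 0$, which is Claim \ref{claim:k=1Atk}.

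I do not anticipate any real obstacle: the argument is a one-line polarization of the cubic Flanders--Atkinson relations, in the spirit of Flanders's original treatment of bounded-rank spaces. The only points deserving a little care are the bookkeeping that identifies the translation vector space of $A(\calT)$, and the observation that adding an element $N_0$ with zero $B$- and $C$-blocks leaves the $B$- and $C$-blocks of $M$ unchanged --- which is precisely what isolates the cross term $B(M)\,A(N_0)\,C(M)$.
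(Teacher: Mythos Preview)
Your proof is correct and follows essentially the same route as the paper's own argument: apply the $k=1$ Flanders--Atkinson identity to $M$ and to $M+N_0$ with $N_0$ in the translation space of $\calT$ (so $B(N_0)=C(N_0)=0$), subtract to isolate $B(M)\,A(N_0)\,C(M)=0$, and combine with $B(M)\,C(M)=0$. One cosmetic remark: the identification of the translation vector space of $\calT$ with $\{N\in S : B(N)=C(N)=0\}$ follows directly from the definition of $\calT$ and the fact that $D(N)=0$ for all $N\in S$; the identity $B(N)\,C(N)=0$ is not needed there.
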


\begin{proof}
The case $A'=I_r$ is already known. Denote by $T$ the translation vector space of $\calT$, and let $N \in T$.
Applying the Flanders-Atkinson lemma to $M$ and $M+N$, we find
$$B(M)\,A(M+N)\,C(M)=0 \quad \text{and} \quad B(M)\,A(M)\,C(M)=0.$$
By subtracting we deduce that $B(M)\,A(N)\,C(M)=0$. Hence $B(M)\,A(J_r+N)\,B(M)=B(M)\,C(M)+B(M)\,A(N)\,C(M)=0$.
This holds for all $N \in T$, and hence $B(M)\,A'\,C(M)=0$ for all $A' \in \calT$.
\end{proof}

\begin{claim}\label{claim:Uinvariant}
The subspace $U$ is invariant under all the elements of $A(\calT)$.
\end{claim}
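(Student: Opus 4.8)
The plan is to feed the elements of $S'$ into the identity of Claim \ref{claim:k=1Atk} in order to upgrade it to a much stronger annihilation statement, and then to invoke the equality-case information (iii) about $B(S')$. The key point is that translating a given $M \in S$ by an element $M'$ of $S'$ does not change $C(M)$ (because $C(M')=0$) while it shifts $B(M)$ by $B(M')$, and, by (iii), as $M'$ runs over $S'$ the matrix $B(M')$ runs over the \emph{entire} space of matrices of $\Mat_{n-r,r}(\F)$ that vanish on $U$.

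Concretely, I would proceed as follows. Fix $M \in S$, $M' \in S'$, and $A' \in A(\calT)$. Since $S$ is a linear subspace, $M+M' \in S$, and $C(M+M')=C(M)$; applying Claim \ref{claim:k=1Atk} to $M+M'$ and subtracting its instance for $M$ yields $B(M')\,A'\,C(M)=0$. Keeping $M$ and $A'$ fixed and letting $M'$ vary over $S'$, point (iii) shows that $B'\,(A'C(M))=0$ for \emph{every} matrix $B' \in \Mat_{n-r,r}(\F)$ whose kernel contains $U$. The common kernel of all such $B'$ is exactly $U$: given a vector $x \notin U$, choose a linear form vanishing on $U$ but not on $x$ and take it as the first row of an otherwise-zero matrix, which is legitimate since $n-r \geq 1$ (as $n > r$). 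Hence $\im(A'C(M)) \subseteq U$ for all $M \in S$ and all $A' \in A(\calT)$. Finally, since $U=\sum_{M\in S}\im C(M)$, we get $A'U=\sum_{M\in S}\im(A'C(M)) \subseteq U$, so $U$ is invariant under every element of $A(\calT)$.

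I expect no serious obstacle: the argument is a two-line manipulation of Claim \ref{claim:k=1Atk}, and the only mildly delicate ingredient is the elementary linear-algebra fact that a matrix $Y \in \Mat_{r,p-r}(\F)$ satisfying $B'Y=0$ for all $B'$ vanishing on $U$ must have $\im Y \subseteq U$. It is worth noting that, in contrast with the later stages of the proof of Theorem \ref{theo:maxdim}, this claim uses neither the optimality of $A(\calT)$ nor the Transitivity Lemma — only Claim \ref{claim:k=1Atk} together with the description (iii) obtained from the equality case.
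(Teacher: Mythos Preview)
Your argument is correct and follows essentially the same route as the paper: both derive the key identity $B(M')\,A'\,C(M)=0$ for $M'\in S'$, $M\in S$, $A'\in A(\calT)$ (you by substituting $M+M'$ into Claim~\ref{claim:k=1Atk} and subtracting, the paper by polarizing), and both then exploit point (iii) on $B(S')$ to force $A'U\subseteq U$. The only cosmetic difference is that the paper wraps the last step in a contradiction using both (ii) and (iii), whereas you argue directly from (iii) and the definition of $U$; your version is marginally cleaner and, as you note, does not require (ii).
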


\begin{proof}
Fix $A' \in A(\calT)$. Polarizing $\forall M \in S, \; B(M) A' C(M)=0$,
we obtain $B(M) A' C(N)+B(N) A' C(M)=0$ for all $(M,N)\in S^2$.
In particular
\begin{equation}\label{eq:superortho}
\forall M \in S', \; \forall N \in S, \; B(M) A' C(N)=0.
\end{equation}
Assume now that $U$ is not invariant under $A'$. Then, choose $X \in U \setminus \{0\}$ such that $A'X \not\in U$.
We can choose $N \in S$ with all columns of $C(N)$ equal to $X$ (by point (ii) above), and we can choose
$M \in S'$ such that $B(M)A'X \neq 0$ (by point (iii) above). This would lead to $B(M)A'C(N) \neq 0$, contradicting
identity \eqref{eq:superortho}. Hence $U$ is invariant under all the elements of $A(\calT)$.
\end{proof}

Next, replacing $\calS$ with an equivalent subspace, we can refine the situation to the point where $U=\F^s \times \{0_{r-s}\}$
(still assuming that $\calS$ contains $J_r$). In that situation, Claim \ref{claim:Uinvariant} shows that the elements of $A(\calT)$ have the form
$$\begin{bmatrix}
[?]_{s \times s} & [?]_{s \times t} \\
[0]_{t \times s} & [?]_{t \times t}
\end{bmatrix}.$$
We write such matrices as
$$A(M)=\begin{bmatrix}
A_{1,1}(M) & A_{1,2}(M) \\
[0]_{t \times s} & A_{2,2}(M)
\end{bmatrix}$$
and we note that $A_{1,1}(\calT)$ and $A_{2,2}(\calT)$ are affine subspaces of nonsingular matrices of $\Mat_s(\F)$ and $\Mat_t(\F)$, respectively.
Hence
$$\dim A(\calT) \leq \dim A_{1,1}(\calT)+\dim A_{2,2}(\calT)+st \leq \dbinom{s}{2}+\dbinom{t}{2}+st=\dbinom{r}{2}=\dim A(\calT),$$
and we deduce that $A_{1,1}(\calT)$ and $A_{2,2}(\calT)$ are optimal and that
$$A(\calT)=A_{1,1}(\calT) \vee \calA_{2,2}(\calT).$$

Next, for all $M \in S$ we write
$$B(M)=\begin{bmatrix}
B_1(M) & B_2(M)
\end{bmatrix} \quad \text{with $B_1(M) \in \Mat_{n-r,s}(\F)$ and $B_2(M) \in \Mat_{n-r,t}(\F)$,}$$
and we note that the mapping
$B_2 : S' \rightarrow \Mat_{n-r,t}(\F)$ is surjective and the restriction of $B_1$ to $S'$ is zero.
Finally, for all $M \in S$ we write
$$C(M)=\begin{bmatrix}
C_1(M) \\
[0]_{t \times (p-r)}
\end{bmatrix} \quad \text{with $C_1(M)\in \Mat_{s,p-r}(\F)$,} $$
and we note that the mapping $C_1 : S \rightarrow \Mat_{s,p-r}(\F)$ is surjective.

We arrive now at a key step, which uses the Transitivity Lemma from Section~\ref{section:technical}.

\begin{claim}\label{claim:B1nul}
One has $B_1(M)=0$ for all $M \in S$.
\end{claim}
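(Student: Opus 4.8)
The plan is to turn the $k=1$ Flanders--Atkinson identity of Claim~\ref{claim:k=1Atk} into a quadratic relation on a single linear map obtained by ``pushing $B_1$ down through $C_1$'', and then to invoke the Transitivity Lemma. Throughout one may assume $s>0$, since otherwise the statement is vacuous ($B_1(M)\in\Mat_{n-r,0}(\F)$); recall also that under the standing equality hypothesis the case $n>p$ forces $s=0$, so the substance lies in the case $n=p$.

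First I would rewrite Claim~\ref{claim:k=1Atk} in block form. For $M\in S$ and $A'\in A(\calT)$, write $A'=\begin{bmatrix} A'_{1,1} & A'_{1,2}\\ [0]_{t\times s} & A'_{2,2}\end{bmatrix}$ (this upper-triangular shape is precisely $A(\calT)=A_{1,1}(\calT)\vee A_{2,2}(\calT)$), together with $B(M)=\begin{bmatrix} B_1(M) & B_2(M)\end{bmatrix}$ and $C(M)=\begin{bmatrix} C_1(M)\\ [0]_{t\times(p-r)}\end{bmatrix}$; a one-line multiplication gives $B(M)\,A'\,C(M)=B_1(M)\,A'_{1,1}\,C_1(M)$. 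Hence Claim~\ref{claim:k=1Atk} reads $B_1(M)\,A'_{1,1}\,C_1(M)=0$ for all $M\in S$ and all $A'_{1,1}\in A_{1,1}(\calT)$. For fixed $M$ the matrices $A_0$ killing $B_1(M)\,A_0\,C_1(M)$ form a linear subspace containing $A_{1,1}(\calT)$, hence also its span $\calA_0:=\Vect_\F(A_{1,1}(\calT))$, so
$$B_1(M)\,A_0\,C_1(M)=0 \qquad\text{for all } M\in S,\ A_0\in\calA_0 .$$

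Next I would exploit that $B_1$ vanishes on $S'=\{M\in S : C_1(M)=0\}$ while $C_1:S\to\Mat_{s,p-r}(\F)$ is onto: then $B_1(M)$ depends only on $C_1(M)$, so there is a unique linear map $\beta:\Mat_{s,p-r}(\F)\to\Mat_{n-r,s}(\F)$ with $B_1=\beta\circ C_1$, and the displayed identity becomes $\beta(X)\,A_0\,X=0$ for all $X\in\Mat_{s,p-r}(\F)$ and all $A_0\in\calA_0$. Specializing to $X=v\,w^T$ with $v\in\F^s$ and $w\in\F^{p-r}\setminus\{0\}$, and cancelling the nonzero row $w^T$ on the right, gives $\beta(v\,w^T)\,A_0\,v=0$ for all $A_0\in\calA_0$. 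Now the Transitivity Lemma enters: applying Lemma~\ref{lemma:transitivity} to the optimal subspace $A_{1,1}(\calT)$ of $\Mat_s(\F)$ produces a hyperplane $H$ of $\F^s$ with $\calA_0\,v=\F^s$ for every $v\in\F^s\setminus H$; for such $v$ the last identity forces $\beta(v\,w^T)=0$ for all $w$, and since $\F^s\setminus H$ spans $\F^s$ the matrices $v\,w^T$ with $v\notin H$ span $\Mat_{s,p-r}(\F)$, whence $\beta\equiv 0$ and $B_1(M)=\beta(C_1(M))=0$ for all $M\in S$.

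I expect the one genuinely non-routine point to be the reduction in the third paragraph --- realizing that $B_1$ descends along $C_1$, so that the polarized Flanders--Atkinson relation collapses to the clean bilinear-type form $\beta(X)A_0X=0$. Once that normalization is in hand, the rank-one specialization $X=v\,w^T$ together with the Transitivity Lemma finishes the argument almost mechanically; the block computation of the second paragraph is routine but is precisely the step that makes the upper-triangular structure of $A(\calT)$ pay off.
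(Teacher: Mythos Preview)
Your proof is correct and follows essentially the same route as the paper: both reduce Claim~\ref{claim:k=1Atk} to $B_1(M)\,K\,C_1(M)=0$ for all $K\in\Vect_\F(A_{1,1}(\calT))$ and then invoke the Transitivity Lemma together with a spanning argument. The only organizational difference is that you first factor $B_1=\beta\circ C_1$ (using that $B_1$ vanishes on $S'=\ker C_1$ and that $C_1$ is onto) and then specialize to rank-one matrices $X=vw^T$, whereas the paper works directly with $M\in S$ and examines the first column of $C_1(M)$; the endgame is identical.
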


\begin{proof}
The result is trivial if $s=0$. Now, we assume that $s>0$.

By applying the result of Claim \ref{claim:k=1Atk}, we now find
$$\forall M \in S, \; \forall K \in A_{1,1}(\calT), \; B_1(M)\, K\, C_1(M)=0.$$
By linearity with respect to $K$, it follows that
\begin{equation}\label{eq:orthoB1C1}
\forall M \in S, \; \forall K \in \Vect_\F(A_{1,1}(\calT)), \; B_1(M)\, K\, C_1(M)=0.
\end{equation}
Remember that $A_{1,1}(\calT)$ is an optimal affine subspace of $\Mat_s(\F)$ (with $s>0$),
and then apply Lemma \ref{lemma:transitivity} to find a linear hyperplane $H$ of $\F^s$
such that $\{K X \mid K \in \Vect_\F(A_{1,1}(\calT))\}=\F^s$ for all $X \in \F^s \setminus H$.

Now, let $M \in S$ and assume that the first column $X$ of $C_1(M)$ is outside of $H$.
Then identity \eqref{eq:orthoB1C1} yields that all the rows of $B_1(M)$
are orthogonal to all the vectors of $\{K X \mid K \in \Vect_\F(A_{1,1}(\calT))\}$, and hence $B_1(M)=0$.
Because of point (ii) on page \pageref{page:startequality}, the set $S_1$ of all $M \in S$ for which the first column of $C_1(M)$ is in $H$ is a proper linear subspace
of $S$, and hence $S \setminus S_1$ is a spanning subset of $S$. Since the linear mapping $B_1$ vanishes everywhere on this subset,
we conclude that $B_1=0$.
\end{proof}

Now, we have linear mappings
$$F : \Mat_{n-r,t}(\F) \longrightarrow \Mat_{t,s}(\F), \quad
G : \Mat_{s,p-r}(\F) \longrightarrow \Mat_{t,s}(\F)$$
and
$$(Y,Z) \in \Mat_{n-r,t}(\F) \times \Mat_{s,p-r}(\F) \longmapsto M_{Y,Z} \in S$$
such that, for every $(Y,Z) \in \Mat_{n-r,t}(\F) \times \Mat_{s,p-r}(\F)$,
$$M_{Y,Z}=\begin{bmatrix}
[?]_{s \times s} & [?]_{s \times t} & Z \\
F(Y)+G(Z) & [?]_{t \times t} & [0]_{t \times (p-r)} \\
[0]_{(n-r) \times s} & Y & [0]_{(n-r)\times (p-r)}
\end{bmatrix}.$$
Moreover, every matrix of $S$ is the sum of a matrix of type $M_{Y,Z}$ with a matrix of $T$, the translation vector space of $\calT$.

\begin{claim}
The mappings $F$ and $G$ are zero.
\end{claim}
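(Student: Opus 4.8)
The plan is to run the Flanders--Atkinson machinery once more, this time feeding it with ``twisted'' pivots taken from $\calT$ instead of $J_r$, so as to obtain an identity tying $F$ and $G$ to the optimal spaces $\calA_1 := A_{1,1}(\calT) \subseteq \Mat_s(\F)$ and $\calA_2 := A_{2,2}(\calT) \subseteq \Mat_t(\F)$ (recall $A(\calT) = \calA_1 \vee \calA_2$), and then to invoke the Transitivity Lemma. We may assume $s \geq 1$ and $t \geq 1$, since otherwise one of $\Mat_{n-r,t}(\F)$, $\Mat_{s,p-r}(\F)$, $\Mat_{t,s}(\F)$ is zero and $F$ or $G$ vanishes for trivial reasons.

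First I would fix $A' \in A(\calT)$ and note that $\begin{bmatrix} A' & 0 \\ 0 & 0 \end{bmatrix}$ lies in $\calT \subseteq \calS$, while $\begin{bmatrix} (A')^{-1} & 0 \\ 0 & I_{n-r} \end{bmatrix} \begin{bmatrix} A' & 0 \\ 0 & 0 \end{bmatrix} = J_r$; applying the affine Flanders--Atkinson lemma with $k = 1$ to the corresponding transform of $M_{Y,Z}$ then gives $B(M_{Y,Z})\, (A')^{-1} A(M_{Y,Z})\, (A')^{-1} C(M_{Y,Z}) = 0$. Writing $A' = \begin{bmatrix} K & * \\ 0 & N \end{bmatrix}$ with $K \in \calA_1$, $N \in \calA_2$ and $*$ arbitrary, so that $(A')^{-1}$ is again block upper-triangular with diagonal blocks $K^{-1}$ and $N^{-1}$, and recalling that $B(M_{Y,Z})$ has its first $s$ columns zero and its last $t$ columns equal to $Y$, that $C(M_{Y,Z})$ has its last $t$ rows zero and its first $s$ rows equal to $Z$, and that the lower-left $t \times s$ block of $A(M_{Y,Z})$ is $F(Y) + G(Z)$, a routine block computation collapses the identity to
$$Y\, N^{-1} \bigl( F(Y) + G(Z) \bigr)\, K^{-1} Z = 0 \qquad \text{for all } Y, Z, \ K \in \calA_1, \ N \in \calA_2.$$
Substituting $Y \leftarrow YN$ and $Z \leftarrow KZ$ (harmless, $K$ and $N$ being invertible) turns this into $Y\bigl(F(YN) + G(KZ)\bigr)Z = 0$; replacing $Z$ by $\mu Z$ and separating the homogeneous components in $\mu$ --- legitimate because $|\F| > r+1 \geq 3$ --- yields $Y\, F(YN)\, Z = 0$ and $Y\, G(KZ)\, Z = 0$ separately. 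Since $p > r$, letting $Z$ range over $\Mat_{s,p-r}(\F)$ in the first gives $Y\, F(YN) = 0$ for all $Y \in \Mat_{n-r,t}(\F)$ and $N \in \calA_2$; since $n > r$, letting $Y$ range over $\Mat_{n-r,t}(\F)$ in the second gives $G(KZ)\, Z = 0$ for all $Z \in \Mat_{s,p-r}(\F)$ and $K \in \calA_1$.

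To conclude that $F = 0$: for fixed $Y$ the map $N \mapsto Y\, F(YN)$ is linear and vanishes on $\calA_2$, hence on $\Vect_\F(\calA_2)$; specializing $Y$ to a rank-one matrix $vu^T$ with $v \in \F^{n-r}$ and $u \in \F^t$ gives $u^T F(v \rho) = 0$ for every row vector $\rho$ in $u^T \Vect_\F(\calA_2) := \{ u^T N : N \in \Vect_\F(\calA_2)\}$. Since $\calA_2^T$ is again optimal (transposition preserves invertibility and dimension), Lemma~\ref{lemma:transitivity} supplies a linear hyperplane $H$ of $\F^t$ with $u^T \Vect_\F(\calA_2) = \Mat_{1,t}(\F)$ for all $u \notin H$; for such $u$ we get $u^T F(v \rho) = 0$ for every $\rho$ and every $v$, i.e. $u$ annihilates the subspace $\sum_{v,\rho} \im F(v\rho)$ of $\F^t$. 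As $\F^t \setminus H$ spans $\F^t$, that subspace is $\{0\}$, so $F(v\rho) = 0$ for all $v, \rho$, and since the rank-one matrices span $\Mat_{n-r,t}(\F)$ we obtain $F = 0$. The vanishing of $G$ follows symmetrically: from $G(KZ)Z = 0$ one derives $G(NZ)Z = 0$ for all $N \in \Vect_\F(\calA_1)$, one specializes $Z = wu^T$ with $w \in \F^s$ and $u \in \F^{p-r}$, and one applies Lemma~\ref{lemma:transitivity} to $\calA_1$ itself to see that $G(\xi u^T)\, w = 0$ for all $\xi \in \F^s$ as soon as $w$ avoids a fixed hyperplane of $\F^s$; this forces $G = 0$.

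The part I expect to be delicate is the passage from $\calA_i$ to $\Vect_\F(\calA_i)$: the Flanders--Atkinson lemma unavoidably introduces the inverses $(A')^{-1}$, whereas the Transitivity Lemma concerns the linear span $\Vect_\F(\calA_i)$ and not the (non-affine) set of inverses of elements of $\calA_i$; the substitutions $Y \leftarrow YN$, $Z \leftarrow KZ$ together with the linearity of $N \mapsto Y\, F(YN)$ in $N$ are precisely what trades those inverses back for honest linear spans. The degree separation in $\mu$ (which, as throughout the paper, requires $\F$ not too small) is the other point calling for a little care.
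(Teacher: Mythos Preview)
Your argument is correct, and it follows a genuinely different route from the paper's. The paper keeps the pivot $J_r$ fixed and pushes the Flanders--Atkinson lemma to $k=2$: it first obtains $YF(Y)=0$ and $G(Z)Z=0$ from $k=1$, notes these alone do not force $F=G=0$ (e.g.\ when $p=r+1$), and then extracts from the $k=2$ identity, perturbed along $N\in T$, the relations $G(Z)\,K\,Z=0$ and $Y\,K'\,F(Y)=0$ for $K\in\Vect_\F(A_{1,1}(\calT))$ and $K'\in\Vect_\F(A_{2,2}(\calT))$, to which the Transitivity Lemma applies as in Claim~\ref{claim:B1nul}. You, instead, vary the pivot over $\calT$ (by left-multiplying $\calS$ by $\mathrm{diag}((A')^{-1},I_{n-r})$), stay at $k=1$, and after the substitution $Y\leftarrow YN$, $Z\leftarrow KZ$ arrive at $Y\,F(YN)=0$ and $G(KZ)\,Z=0$ --- identities in which $K,N$ act \emph{inside} $F$ and $G$ rather than between them. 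Both packages feed the Transitivity Lemma, but your version trades the $k=2$ computation for a change of pivot and the rank-one specialization; the price is that you must argue separately that linearity in $N$ (resp.\ $K$) passes you from $\calA_2$ (resp.\ $\calA_1$) to its linear span, which you do correctly. Either approach works under the standing hypotheses $n\geq p>r$ and $|\F|>r+1$.
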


\begin{proof}
The result is trivial if $s=0$ or $t=0$, so we assume that $s>0$ and $t>0$.

Applying the Flanders-Atkinson lemma once more (with $k=1$) yields
$$\forall (Y,Z) \in \Mat_{n-r,t}(\F) \times \Mat_{s,p-r}(\F), \; Y (F(Y)+G(Z))\,Z=0.$$
Because $|\F|>2$, we can split this identity (after applying it to $(\alpha Y,Z)$ with an arbitrary $\alpha \in \F$) into :
$$\forall (Y,Z) \in \Mat_{n-r,t}(\F) \times \Mat_{s,p-r}(\F), \; YF(Y)Z=0 \quad \text{and} \quad YG(Z)Z=0,$$
and hence
\begin{equation}\label{eq:Atk1}
\forall (Y,Z) \in \Mat_{n-r,t}(\F) \times \Mat_{s,p-r}(\F), \; YF(Y)=0 \quad \text{and} \quad G(Z)Z=0.
\end{equation}
Unfortunately, this is still insufficient if $p=r+1$!
So, we go further in the Flanders-Atkinson lemma and take $k=2$ in its second set of identities.
Let $(Y,Z) \in \Mat_{n-r,t}(\F) \times \Mat_{s,p-r}(\F)$ and $N \in T$. Applying the Flanders-Atkinson lemma
to $M_{Y,Z}+\alpha N$ for an arbitrary $\alpha \in \F$,
we find
$$\forall \alpha \in \F, \; B(M_{Y,Z})\, (A(M_{Y,Z})+\alpha\,A(N))^2\, C(M_{Y,Z})=0.$$
Since $|\F|>2$ and the left-hand side is formally a polynomial of degree at most $2$ in $\alpha$, we extract the coefficient on $\alpha$ to obtain
$$B(M_{Y,Z})\, \bigl(A(M_{Y,Z})A(N)+A(N)A(M_{Y,Z})\bigr)\, C(M_{Y,Z})=0,$$
which can be rewritten as
$$Y (F(Y)+G(Z))A_{1,1}(N) Z+Y A_{2,2}(N) (F(Y)+G(Z))\,Z=0.$$
Thanks to \eqref{eq:Atk1}, we simplify this as
$$Y G(Z) A_{1,1}(N)Z+Y A_{2,2}(N) F(Y)Z=0.$$
Now, remembering that $A(\calT)=A_{1,1}(\calT) \vee  A_{2,2}(\calT)$, we derive by varying $N$ that
$$\forall K \in A_{1,1}(T), \; Y G(Z) K Z=0 \quad \text{and} \quad
\forall K' \in A_{2,2}(T), \; Y K' F(Y) Z=0.$$
Again, by varying $Y$ in the first identity and $Z$ in the second one, we end up with the simplified identities
$$\forall K \in A_{1,1}(T),\; \forall Z \in \Mat_{s,p-r}(\F), \; G(Z)KZ=0.$$
and
$$\forall K' \in A_{2,2}(T), \; \forall Y \in \Mat_{n-r,t}(\F), \; Y K' F(Y)=0.$$
Note that $J_r \in \calT$ leads to $I_s \in A_{1,1}(\calT)$ and $I_t \in A_{2,2}(\calT)$.
Hence, by combining the previous two identities with \eqref{eq:Atk1} we deduce that
$$\forall K \in \Vect_\F(A_{1,1}(\calT)),\; \forall Z \in \Mat_{s,p-r}(\F), \; G(Z)KZ=0$$
and
$$\forall K' \in \Vect_\F(A_{2,2}(\calT)), \; \forall Y \in \Mat_{n-r,t}(\F), \; Y K' F(Y)=0.$$
By using the Transitivity Lemma just like in the proof of Claim \ref{claim:B1nul}, we obtain that $G(Z)=0$ for all $Z \in \Mat_{s,p-r}(\F)$.
Finally, note by transposing that
$$\forall K' \in \Vect_\F(A_{2,2}(\calT)^T), \; \forall Y \in \Mat_{t,n-r}(\F), \; F(Y^T)^T K' Y^T=0$$
and that $A_{2,2}(\calT)^T$ is an optimal affine subspace of $\Mat_t(\F)$. Hence, by applying the Transitivity Lemma once more, we obtain
$F(Y^T)^T=0$ for all $Y \in \Mat_{t,n-r}(\F)$, and finally $F=0$.
\end{proof}

\subsection{The case of equality (II)}

As every matrix of $S$ is the sum of a matrix of type $M_{Y,Z}$ and of a matrix of $T$, we obtain that every $M \in S$ has the form
$$\begin{bmatrix}
[?]_{s \times s} & [?]_{s \times t} & [?]_{s \times (p-r)} \\
[0]_{t \times s} & [?]_{t \times t} & [0]_{s \times (p-r)} \\
[0]_{(n-r) \times s} & [?]_{(n-r) \times t} & [0]_{(n-r)\times (p-r)}
\end{bmatrix}.$$

At this point, we completely change the space: by permuting columns, we find that
$\calS$ is equivalent to a space $\calU$ of matrices in which every matrix has the form
$$M=\begin{bmatrix}
[?]_{s \times t} & L(M) \\
K(M) & [0]_{(n-s) \times (p-t)}
\end{bmatrix} \quad \text{with $K(M) \in \Mat_{n-s,t}(\F)$ and $L(M) \in \Mat_{s,p-t}(\F)$.}$$
And now we will conclude thanks to Theorem \ref{theo:affinemax}.
First of all, for all $M \in \calU$ we have
$$\rk M \leq t+\rk L(M) \leq t+s \quad \text{and} \quad \rk M \leq s+\rk K(M)\leq s+t,$$
which yields $\rk L(M)=s$ and $\rk K(M)=t$.
By Theorem \ref{theo:dim}, this yields $\dim L(\calU) \leq \dbinom{s}{2}+s(p-r)$ and $\dim K(\calU) \leq \dbinom{t}{2}+t(n-r)$.
Moreover
$$\dim \calU \leq st+\dim K(\calU)+\dim L(\calU) \leq \dbinom{r}{2}+r(n-r),$$
and it follows:
\begin{enumerate}[(i)]
\item That $\dim L(\calU)=\dbinom{s}{2}+s(p-r)$;
\item That $\dim K(\calU)=\dbinom{t}{2}+t(n-r)$;
\item And that $\calU$ is the set of all matrices of the form
$$\begin{bmatrix}
[?]_{s \times t} & L' \\
K' & [0]_{(n-s) \times (p-t)}
\end{bmatrix} \quad \text{with $K' \in K(\calU)$ and $L' \in L(\calU)$.}$$
\end{enumerate}

Now, we can finally apply Theorem \ref{theo:affinemax} to $K(\calU)$ and $L(\calU)^T$. This yields
respective optimal affine subspaces $\calM$ and $\calN$ of $\Mat_t(\F)$ and $\Mat_s(\F)$ and invertible matrices
$P_1 \in \GL_{n-s}(\F)$ and $Q_2 \in \GL_{p-t}(\F)$ such that
$K(\calU)=P_1 \widetilde{\calM}^{(n-s)}$ and $L(\calU)=(\widetilde{\calN}^{(p-t)})^T Q_2$.
Replacing $\calU$ with $(I_s \oplus P_1)^{-1} \calU (I_t \oplus Q_2)^{-1}$, we can further reduce the situation to the one where
$K(\calU)=\widetilde{\calM}^{(n-s)}$ and $L(\calU)=(\widetilde{\calN}^{(p-t)})^T$.
And from there we have the equality
$$\calS \sim \calU=\calM \wedge_{n,p} \calN^T.$$
If $n>p$ then $s=0$ (as seen at the start of Section \ref{section:equality1}) and hence
$$\calS \sim \calU=\widetilde{\calM}^{(n,p)}.$$

This completes the proof of the existence statements in Theorem \ref{theo:maxdim}.

\section{Classification of spaces with maximal dimension}\label{section:uniqueness}

In this final part, we prove the uniqueness statements of Theorem \ref{theo:maxdim}.
We will discuss the situations $n=p$ and $n>p$ separately, beginning with the latter.
In both cases, we will use geometric considerations, by seeing $\calS$
as representing a set of bilinear forms on $\F^n \times \F^p$.

\subsection{The case $n>p$}

Let $\calM_1$ and $\calM_2$ be optimal affine subspaces of $\Mat_r(\F)$, and assume that
$\widetilde{\calM_1}^{(n,p)} \sim \widetilde{\calM_2}^{(n,p)}$.
We will prove that $\widetilde{\calM_1}^{(n)} \sim \widetilde{\calM_2}^{(n)}$ (see the notation in Theorem \ref{theo:affinemax})
and then Theorem \ref{theo:affinemax} will yield that $\calM_1 \sim \calM_2$.

To prove the claimed result, we start by noting that all the elements of $\widetilde{\calM_1}^{(n,p)}$
have $\Vect(e_{r+1},\dots,e_p)$ as nullspace, where $(e_1,\dots,e_p)$ denotes the standard basis of $\F^p$.
Indeed, it is clear that all such matrices have $\Vect(e_{r+1},\dots,e_p)$ included in their nullspace, and the equality follows from the fact that every matrix in
$\widetilde{\calM_1}^{(n,p)}$ has rank $r$. Now, let $P\in \GL_n(\F)$ and $Q \in \GL_p(\F)$ be such that
$\widetilde{\calM_2}^{(n,p)}=P \widetilde{\calM_1}^{(n,p)} Q$.
Applying the previous remark, we find that $Q$ leaves $\Vect(e_{r+1},\dots,e_p)$ invariant. Hence
$$Q=\begin{bmatrix}
Q_1 & [0]_{r \times (p-r)} \\
[?]_{(p-r) \times r} & [?]_{(p-r) \times (p-r)}
\end{bmatrix}$$
for some $Q_1 \in \GL_r(\F)$. Then, by extracting the first $p$ columns we find that
$$\widetilde{\calM_2}^{(n)}=P \widetilde{\calM_1}^{(n)} Q_1,$$
which is the claimed equivalence. This completes the proof.

\subsection{The case $n=p$}

Now, we assume that $n=p$.
We start by considering a partition $r=s+t$, with $s \geq 0$ and $t \geq 0$,
together with optimal affine subspaces $\calM \subseteq \Mat_t(\F)$ and $\calN \subseteq \Mat_s(\F)$, and we consider the
space $\calS=\calM \wedge_{n,n} \calN$.
For subsets $\calX$ and $\calY$ of $\F^n$, we will write $\calX \underset{\calS}{\bot} \calY$ to mean that
$$\forall (X,Y) \in \calX \times \calY, \; \forall M \in \calS, \; X^T M Y=0,$$
i.e.\ $\calX$ is left-orthogonal to $\calY$ under all the elements of $\calS$, seen as bilinear forms on $\F^n$.

Denote by $(e_1,\dots,e_n)$ the standard basis of $\F^n$.
The key is to consider the spaces $F:=\Vect(e_{s+1},\dots,e_n)$ and $G:=\Vect(e_{t+1},\dots,e_n)$
and to note that $F \underset{\calS}{\bot} G$. We shall now see that $(F,G)$ is the only such pair for which $\dim F+\dim G=2n-r$.

\begin{claim}\label{claim:orthogonality}
Let $F',G'$ be linear subspaces of $\F^n$ such that $F' \underset{\calS}{\bot} G'$ and $\dim F'+\dim G'=2n-r$.
Then $F'=F$ and $G'=G$.
\end{claim}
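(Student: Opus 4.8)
The plan is to analyze the orthogonality relation $F' \underset{\calS}{\bot} G'$ by exploiting the specific block structure of $\calS = \calM \wedge_{n,n}\calN$ together with the fact that $\calM$ and $\calN$ are optimal, in particular the transitivity property from Lemma \ref{lemma:transitivity}. Write vectors of $\F^n$ in the block decomposition $\F^s \times \F^t \times \F^{n-r}$ matching the three block-rows in the definition of $\calM \wedge_{n,n}\calN$. The space $F = \Vect(e_{s+1},\dots,e_n)$ is the set of vectors with zero $\F^s$-component, and $G=\Vect(e_{t+1},\dots,e_n)$ is the set of vectors with zero $\F^t$-component (note the $\F^t$ block sits in rows $s+1,\dots,s+t$, so "$G$" as defined is $\{0\}\times\F^t$-complement — I would first double-check the indexing against the block shape, but the reader can take the defining displays at face value). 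The inequality $\dim F' + \dim G' = 2n-r$ is the extremal case, so the strategy is to prove the two bounds $\dim F' \le \dim F = n-s$ and $\dim G' \le \dim G = n-t$ \emph{given} that the other space is already large, and then force equality.

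The key step will be a codimension/transitivity argument. Suppose $F' \underset{\calS}{\bot} G'$. Consider the projection of $G'$ onto the $\F^s$-coordinate block. Using that $\calN \subseteq \Mat_s(\F)$ is optimal, Lemma \ref{lemma:transitivity} furnishes an $\calN$-transitivity exclusion hyperplane $H_{\calN}$ of $\F^s$: for $X \notin H_{\calN}$, $\Vect_\F(\calN)X = \F^s$. Looking at the block entries of a generic matrix in $\calS$, if $Y \in G'$ has its $\F^s$-component $y_s \notin H_\calN$, then pairing $Y$ on the right with $\calS$ and reading off the top block-row shows that any $X \in F'$ must have its $\F^s$-component equal to $0$ once we also use that the $[?]$-blocks in column-positions $s{+}1,\dots$ are free; more precisely the free $[?]_{s\times t}$ and $[?]_{s\times(n-r)}$ blocks let us pin down the $\F^t$- and $\F^{n-r}$-components of $X$ too unless $Y$'s corresponding components vanish. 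Running the symmetric argument with $\calM$ optimal and its transitivity exclusion $H_\calM \subseteq \F^t$, one gets a dual constraint. The upshot should be that $F'$ is contained in $F$ \emph{up to} the subspace of vectors whose relevant components land in the exclusion hyperplanes, which is a codimension condition; combined with the matching bound for $G'$ and the dimension count $\dim F' + \dim G' = 2n-r = (n-s)+(n-t)$, both inclusions $F' \subseteq F$ and $G' \subseteq G$ must be equalities.

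Concretely I would argue as follows. Let $a = \dim F'$, $b = \dim G'$ with $a+b = 2n-r$. Decompose along $\F^s \times \F^t \times \F^{n-r}$ and let $\pi_s, \pi_t$ denote the first two block-projections. Claim: $\pi_s(G') \subseteq H_\calN$ or else $F'$ is forced to lie inside a space of dimension $< n-s$; symmetrically $\pi_t(F') \subseteq H_\calM$ or $G'$ is too small. A clean way: define $F'' := \{X \in F' : \pi_s(X) = 0\}$, which has codimension at most $s$ in $F'$; show $\calS$-orthogonality of $F''$ to all of $G'$ forces, via the free blocks, that $\pi_t(F'') = 0$ and $\pi_{n-r}(F'')$ is arbitrary, so in fact $F'' \subseteq \F^{n-r}$-block only — wait, more care is needed because of the $\calM$-block interaction. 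The honest route: handle the pure $\F^{n-r}$-components first (those are annihilated by every $M \in \calS$ on both sides, so they impose no constraint and contribute $n-r$ freely to \emph{both} $F'$ and $G'$), then reduce to the quotient situation on $\F^s \times \F^t$ where $\calM \wedge \calN$ restricts to something built purely from the optimal spaces $\calM, \calN$ plus the single free $\F^s \times \F^t$ block, and there the bilinear form is nondegenerate-enough that transitivity pins everything down. The main obstacle I anticipate is precisely this bookkeeping: correctly separating the "free" $\F^{n-r}$ directions (which inflate both dimensions by $n-r$ with no orthogonality cost) from the genuinely constrained $\F^s \times \F^t$ part, and then invoking Lemma \ref{lemma:transitivity} on both $\calM$ and $\calN$ to show that in the constrained part the only maximal orthogonal pair is $(\{0\}\times\F^t,\ \F^s\times\{0\})$ — i.e. $F' = F$, $G' = G$ — with the dimension equality $\dim F' + \dim G' = 2n-r$ being exactly what rules out any "mixed" pair that trades dimension between the two blocks.
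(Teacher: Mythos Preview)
Your plan has a genuine error and misses the key structural observation that drives the paper's proof. First, the row and column block structures of $\calS=\calM\wedge_{n,n}\calN$ are \emph{different}: rows split as $s,t,n-r$ while columns split as $t,s,n-r$. So $F$ (acting on the left) lives in the row decomposition $\F^s\times\F^t\times\F^{n-r}$, whereas $G$ (acting on the right) lives in the column decomposition $\F^t\times\F^s\times\F^{n-r}$; your hesitation about the indexing is a symptom of conflating the two. More seriously, your assertion that the $\F^{n-r}$-components ``are annihilated by every $M\in\calS$ on both sides, so they impose no constraint and contribute $n-r$ freely to both $F'$ and $G'$'' is false: the bottom $n{-}r$ rows of a generic $M\in\calS$ contain the free block $[?]_{(n-r)\times t}$, and the rightmost $n{-}r$ columns contain the free block $[?]_{s\times(n-r)}$, so vectors supported on the last $n{-}r$ coordinates are \emph{not} in the left- or right-kernel of $\calS$. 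Your proposed reduction to a ``quotient situation on $\F^s\times\F^t$'' therefore does not go through as stated.

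The paper's argument is simpler and does not use the Transitivity Lemma at all. The single free block $[?]_{s\times t}$ (upper-left) already forces a clean dichotomy: writing $X\in F'$ with top $s$-block $X_1$ and $Y\in G'$ with top $t$-block $Y_1$, the relation $X_1^T A Y_1=0$ for all $A\in\Mat_{s,t}(\F)$ yields $X_1=0$ or $Y_1=0$, hence $F'\subseteq F$ or $G'\subseteq G$. Assuming $F'\subseteq F$ (up to transposition), one then picks a single rank-$r$ matrix $M\in\calS$ and uses elementary orthogonality: since $\dim F'+\dim G'=2n-r=2n-\rk M$, the space $G'$ is exactly the right-$M$-orthogonal of $F'$, and likewise $G$ is the right-$M$-orthogonal of $F$; the inclusion $F'\subseteq F$ then gives $G\subseteq G'$. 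If the inclusion $G\subseteq G'$ were strict, one splits off a nonzero piece $G''\subseteq\Vect(e_1,\dots,e_t)$, uses the free block $[?]_{(n-r)\times t}$ to force $F'$ into $\F^t\times\{0_{n-r}\}$ inside $\F^{n-s}$, and then the resulting $\calM$-orthogonality $F''_1\perp G''_1$ inside $\F^t$ with $\calM$ containing an invertible matrix gives $\dim F''_1+\dim G''_1\le t$, contradicting $n>r$. No transitivity is needed; the work is done by the free blocks and a single nondegenerate pairing.
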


\begin{proof}
The assumption on the dimension yields that $\dim F'=n-s'$ and $\dim G'=n-t'$ for some pair $(s',t')$ of non-negative integers such that $s'+t'=r$.
Next, note by linearity that $F' \underset{\Vect(\calS)}{\bot} G'$.
Finally, note by basic orthogonality theory that for every rank $r$ matrix $A$, if we have linear subspaces
$H$ and $H'$ of $\F^n$ such that $H \underset{\{A\}}{\bot} H'$, then $\dim H+\dim H'\leq 2n-r$, and if equality occurs then $H'$ is the right-$A$-orthogonal of $H$, that is
the set of all vectors $Y \in \F^n$ such that $\forall X \in H, \; X^T AY=0$.

We will start by proving that $F' \subseteq F$ or $G' \subseteq G$.
To do so, we take arbitrary vectors $X \in F'$ and $Y \in G'$, which we write
$X=\begin{bmatrix}
X_1 \\
[?]_{(n-s) \times 1}
\end{bmatrix}$ and
$Y=\begin{bmatrix}
Y_1 \\
[?]_{(n-t) \times 1}
\end{bmatrix}$
with $X_1 \in \F^s$ and $Y_1 \in \F^t$.
Note that, for all $A \in \Mat_{s,t}(\F)$, the matrix
$$\begin{bmatrix}
A & [0]_{s \times (n-t)} \\
[0]_{(n-s) \times t} &  [0]_{(n-s) \times (n-t)}
\end{bmatrix}$$
belongs to $\Vect_\F(\calS)$, which leads to $X_1^T A Y_1=0$.
Varying $A$ shows that $X_1=0$ or $Y_1=0$.
And this proves that $F' \subseteq F$ or $G' \subseteq G$.

Now, if $F'\not\subseteq F$ we have $G' \subseteq G$ and transposing $\calS$ takes us back to the first case.
So, in the remainder of the proof we will only consider the case where $F' \subseteq F$.

Next, we prove that $G \subseteq G'$.
This is an easy consequence of basic orthogonality theory. Indeed, in $\calS$ we can pick a rank $r$ matrix $M$.
By an early remark in this proof we find that $G$ is the right-$M$-orthogonal of $F$, and $G'$ is the right-$M$-orthogonal of $F'$.
Since $F' \subseteq F$ this yields $G \subseteq G'$.

Hence if $G'=G$ then $F=F'$ because $\dim G+\dim F=\dim G'+\dim F'$.
Now, we assume that $G \subsetneq G'$ in the remainder of the proof. Then we can split $G'=G''\oplus G$ for a nonzero linear subspace $G''$ of $\Vect(e_1,\dots,e_t)$.
We will use the fact that $F' \underset{\calS}{\bot} G''$. By identifying $F'$ and $G''$ with respective linear subspaces of $F'_1$ and $G''_1$ of $\F^{n-s}$ and $\F^t$,
we obtain that $F'_1$ is left-$\widetilde{\calM}^{(n-s)}$-orthogonal to $G''_1$, and $G''_1 \neq \{0\}$.
Next, we claim that $F'_1 \subseteq \F^t \times \{0_{n-r}\}$.

To see this, choose $Y \in G''_1 \setminus \{0\}$, let $B \in \Mat_{n-r,t}(\F)$ and consider the matrix $M=\begin{bmatrix}
[0]_{t \times t} \\
B
\end{bmatrix}$, which belongs to $\Vect_\F(\widetilde{\calM}^{(n-s)})$. Hence $\forall X \in F'_1, \; X^T MY=0$.
By varying $B$ we see $\forall X \in F'_1, \; \forall Y \in \{0_t\} \times \F^{n-r}, \; X^T Y=0$, which yields $F'_1 \subseteq \F^t \times \{0_{n-r}\}$.

Now, we can finally conclude: by identifying $F'_1$ with a subspace $F''_1$ of $\F^t$, we see that $F''_1 \underset{\calM}{\bot} G''_1$.
Since $\calM$ contains at least one rank $t$ matrix, this yields
$\dim F''_1+\dim G''_1 \leq t$, which reads $(n-s')+(n-t')-(n-t) \leq t$, that is $n-r \leq 0$, and this contradicts the assumption that $n>r$.
We conclude that $F=F'$ and $G=G'$, as claimed.
\end{proof}

Now, we can conclude. Let $r=s'+t'$ be a partition of $r$ into non-negative integers, let $\calM' \subseteq \Mat_{t'}(\F)$ and $\calN' \subseteq \Mat_{s'}(\F)$, and consider the space $\calS':=\calM' \wedge_{n,n} \calN'$. Set $F'':=\Vect(e_{s'+1},\dots,e_n)$ and $G'':=\Vect(e_{t'+1},\dots,e_n)$.
Assume that there exist $P,Q$ in $\GL_n(\F)$ such that $\calS'=P\,\calS\,Q$.
Then for $F':=P^T F''$ and $G':=Q G''$ we have $\dim F'+\dim G'=2n-r$ and $F' \underset{\calS}{\bot} G'$.
By Claim \ref{claim:orthogonality}, we deduce that $F'=F$ and $G'=G$, and further that $s=s'$ and $t=t'$ by comparing the dimensions.
Hence $F=P^TF$ and $G=QG$. It follows that
$$P=\begin{bmatrix}
P_1 & [?]_{s \times (n-s)} \\
[0]_{(n-s) \times s} & P_2
\end{bmatrix} \quad \text{and} \quad
Q=\begin{bmatrix}
Q_1 & [0]_{t \times (n-t)} \\
[?]_{(n-t) \times t} & Q_2
\end{bmatrix}$$
for some $P_1 \in \GL_s(\F)$, $Q_1 \in \GL_t(\F)$, $P_2 \in \GL_{n-s}(\F)$ and $Q_2 \in \GL_{n-t}(\F)$.
Then, by extracting the blocks from the identity $\calS'=P\,\calS\,Q$, we obtain
$$\widetilde{\calM'}^{(n)}=P_2\, \widetilde{\calM}^{(n)}\, Q_1 \quad \text{and} \quad
\Bigl(\widetilde{(\calN')^T}^{(n)}\Bigr)^T=P_1\, \Bigl(\widetilde{(\calN)^T}^{(n)}\Bigr)^T\, Q_2.$$
From there, we apply the uniqueness statement in Theorem \ref{theo:affinemax} to obtain that
$\calM' \sim \calM$ and $(\calN')^T \sim (\calN)^T$, and by transposing the latter we conclude that $\calN' \sim \calN$.
This completes the proof of Theorem \ref{theo:maxdim}.


\begin{thebibliography}{1}
\bibitem{AtkinsonPrim}
M.D. Atkinson,
{Primitive spaces of matrices of bounded rank II.}
J. Austral. Math. Soc. (Ser. A)
{\bf 34} (1983) 306--315.

\bibitem{Flanders}
H. Flanders,
{On spaces of linear transformations with bounded rank.}
J. Lond. Math. Soc.
{\bf 37} (1962) 10--16.

\bibitem{Quinlan}
R. Quinlan,
{Spaces of matrices without non-zero eigenvalues in their field of definition, and a question of Szechtman.}
Linear Algebra Appl.
{\bf 434} (2011) 1580--1587.

\bibitem{Rubei}
E. Rubei,
{Affine subspaces of matrices with constant rank.}
Linear Algebra Appl.
{\bf 644} (2022) 259--269.

\bibitem{dSPaffpres}
C. de Seguins Pazzis,
{The affine preservers of non-singular matrices.}
Arch. Math.
{\bf 95} (2010) 333--342.

\bibitem{dSPgivenrank}
C. de Seguins Pazzis,
{On the matrices of given rank in a large subspace.}
Linear Algebra Appl.
{\bf 435-1} (2011) 147--151.

\bibitem{dSPlargeaffinerankbelow}
C. de Seguins Pazzis,
{Large affine spaces of matrices with rank bounded below.}
Linear Algebra Appl.
{\bf 437-2} (2012) 499--518.

\bibitem{dSPlargeaffinenonsingular}
C. de Seguins Pazzis,
{Large affine spaces of non-singular matrices.}
Trans. Amer. Math. Soc.
{\bf 365} (2013) 2569--2596.

\bibitem{dSPLLD1}
C. de Seguins Pazzis,
{Local linear dependence seen through duality I.}
J. Pure Appl. Algebra
{\bf 219} (2015) 2144--2188.

\bibitem{dSPlargerankrevisited}
C. de Seguins Pazzis,
{Large spaces of bounded rank matrices revisited.}
Linear Algebra Appl.
{\bf 504} (2016) 124--189.

\bibitem{dSPprimitiveF2}
C. de Seguins Pazzis,
{Primitive spaces of matrices with upper rank two over the field with two elements.}
Linear Multilinear Algebra
{\bf 64} (2016) 1321--1353.

\bibitem{dSPaffaltconstantrank}
C. de Seguins Pazzis,
{On affine spaces of alternating matrices with constant rank.}
Preprint, https://arxiv.org/abs/2307.10347



\end{thebibliography}
\end{document}